\newtheorem{theorem}{Theorem}[section]
\newtheorem{lemma}[theorem]{Lemma}
\newtheorem{proposition}[theorem]{Proposition}
\newtheorem{conjecture}[theorem]{Conjecture}
\theoremstyle{definition}
\newtheorem{definition}[theorem]{Definition}
\newtheorem{example}[theorem]{Example}
\newtheorem*{notation}{Notation}
\theoremstyle{remark}
\newtheorem{remark}[theorem]{Remark}
\numberwithin{equation}{section}
\newcommand{\gon}{\mathrm{gon}}
\newcommand{\degree}{\mathrm{d}}
\newcommand{\Div}{\mathrm{Div}}
\newcommand{\Tr}{\mathrm{Tr}}
\newcommand{\Image}{\mathrm{Im}}
\begin{document}

\title{The gonality theorem of Noether for hypersurfaces}


\author{F. Bastianelli}
\address{Dipartimento di Matematica e Applicazioni, Universit\`a degli Studi di Milano-Bicocca, via Cozzi 53, 20125 Milano - Italy}
\email{francesco.bastianelli@unimib.it}
\thanks{The first author has been partially supported by 1) Istituto Nazionale di Alta Matematica ``F. Severi''; 2) FAR 2010 (PV) \emph{``Variet\`a algebriche, calcolo algebrico, grafi orientati e topologici''}; 3) INdAM (GNSAGA)}

\author{R. Cortini}
\address{I. T. G. Fazzini-Mercantini, via Salvo D'Acquisto 30, 63013 Grottammare (AP) - Italy}
\email{renzacortini@fazzinimercantini.it}

\author{P. De Poi}
\address{Dipartimento di Matematica e Informatica, Universit\`a degli Studi di Udine, via delle Scienze 206, 33100 Udine - Italy}
\email{pietro.depoi@uniud.it}
\thanks{The third author has been partially supported by MIUR, project \emph{``Geometria delle variet\`a algebriche e dei loro spazi
di moduli''}}

\date{}


\begin{abstract}
It is well known since Noether that the gonality of a smooth curve ${C\subset \mathbb{P}^2}$ of degree $d\geq 4$ is $d-1$.
Given a $k$-dimensional complex projective variety $X$, the most natural extension of gonality is probably the degree of irrationality, that is the minimum degree of a dominant rational map ${X\dashrightarrow\mathbb{P}^k}$.
In this paper we are aimed at extending the assertion on plane curves to smooth hypersurfaces in $\mathbb{P}^n$ in terms of degree of irrationality.
We prove that both surfaces in $\mathbb{P}^3$ and threefolds in $\mathbb{P}^4$ of sufficiently large degree $d$ have degree of irrationality $d-1$, except for finitely many cases we classify, whose degree of irrationality is $d-2$.
To this aim we use Mumford's technique of induced differentials and we shift the problem to study first order congruences of lines of $\mathbb{P}^n$.
In particular, we also slightly improve the description of such congruences in $\mathbb{P}^4$ and we provide a bound on degree of irrationality of hypersurfaces of arbitrary dimension.
\end{abstract}

\maketitle

\section{Introduction}

Let $C$ be a smooth complex projective curve of genus $g$.
The \emph{gonality} of $C$---denoted by $\gon(C)$---is the important birational invariant defined as the minimum degree $m$ of a covering ${C\longrightarrow \mathbb{P}^1}$.
In the case of plane curves, the gonality is governed by the following classical result stated by Max Noether (cf. \cite{N}).
\begin{theorem}\label{theorem NOETHER}
Let ${C\subset \mathbb{P}^2}$ be a smooth curve of degree ${d\geq 4}$.
Then the gonality of $C$ is ${\gon(C)=d-1}$.
Moreover, any morphism ${C\longrightarrow \mathbb{P}^1}$ of degree $d-1$ is obtained projecting $C$ from one of its points.
\end{theorem}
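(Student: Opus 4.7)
The plan is to prove the two inequalities $\gon(C)\le d-1$ and $\gon(C)\ge d-1$ separately, and then identify the extremal pencils with projections from points of $C$. The upper bound is the easy direction: for any $p\in C$, the projection $\pi_p\colon\mathbb{P}^2\dashrightarrow\mathbb{P}^1$ from $p$ restricts to a rational map on $C$; since $C$ is smooth at $p$, this extends to a morphism $\overline{\pi}_p\colon C\to\mathbb{P}^1$, and a general line through $p$ meets $C$ transversally in $d-1$ points other than $p$, so $\deg\overline{\pi}_p=d-1$.

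For the lower bound, I would argue by contradiction: suppose $|D|$ is a base-point-free $g^{1}_{m}$ on $C$ with $m\le d-2$, and let $\Delta=p_1+\dots+p_m$ be a general divisor. By adjunction $K_C\simeq\mathcal{O}_C(d-3)$, whence $g=\binom{d-1}{2}$ and the complete canonical series is cut out by plane curves of degree $d-3$. Riemann--Roch then gives $h^0(K_C-\Delta)=g-m+1$, so $\Delta$ imposes only $m-1$ independent conditions on $|\mathcal{O}_{\mathbb{P}^2}(d-3)|$; equivalently, every plane curve of degree $d-3$ vanishing at any $m-1$ of the $p_i$ automatically contains the remaining one. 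Combining this residuation property with the uniform position theorem for the fibres of a pencil on $C$---which says the monodromy of $\{\Delta_t\}_{t\in\mathbb{P}^1}$ acts as the full symmetric group on a general fibre---one deduces that $p_1,\dots,p_m$ must be collinear; the heuristic is to apply the Cayley--Bacharach property to reducible test curves of degree $d-3$ and use uniform position to permute the roles of the $p_i$.

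Once collinearity is in hand, each divisor $\Delta_t$ lies on a line $\ell_t\subset\mathbb{P}^2$, and the assignment $t\mapsto\ell_t$ is a rational map $\mathbb{P}^1\to(\mathbb{P}^2)^\vee$. A short argument (two distinct divisors of $|D|$ cannot lie on the same line without violating base-point-freeness together with $h^0(D)=2$) shows this map is an isomorphism onto a line in $(\mathbb{P}^2)^\vee$, so the lines $\ell_t$ form an ordinary pencil through a common point $q\in\mathbb{P}^2$. Since $|D|$ is base-point-free, $q$ lies in the residual $R_t=\ell_t\cap C-\Delta_t$ for every $t$; the constancy of $R_t=q$ forces $d-m=1$, contradicting $m\le d-2$, while simultaneously showing that every $g^{1}_{d-1}$ coincides with the projection from some $q\in C$. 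The main technical obstacle is the collinearity step: turning the residuation condition on plane curves of degree $d-3$ into the concrete statement that the $p_i$ lie on a line. This involves a careful Cayley--Bacharach argument, a delicate use of uniform position, and a separate treatment of the borderline case $d=4$, where $d-3=1$ and the residuation reduces to the classical fact that a smooth plane quartic is non-hyperelliptic.
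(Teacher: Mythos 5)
Your upper bound and your derivation of the Cayley--Bacharach property from Riemann--Roch are correct, and they reproduce the engine of the paper's argument (the paper packages the same residuation statement as the ``null trace'' condition on the graph of the map, Proposition 2.3). But the proposal has a genuine gap exactly where you flag ``the main technical obstacle'': the passage from Cayley--Bacharach to collinearity is left as a ``heuristic'' invoking uniform position, and the subsequent claim that the lines $\ell_t$ form a pencil (i.e.\ that $t\mapsto\ell_t$ lands in a \emph{line} of the dual plane) is not proved --- your ``short argument'' only gives injectivity of $t\mapsto\ell_t$, which is compatible with the image being a conic or worse; ruling that out requires controlling the residual divisors $R_t$, which for $d-m\geq 2$ is not automatic. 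Moreover, the detour through collinearity is unnecessary for the lower bound: once the $m\leq d-2$ points $p_1,\dots,p_m$ of a general fibre satisfy Cayley--Bacharach with respect to $\left|\mathcal{O}_{\mathbb{P}^2}(d-3)\right|$, take $m-1$ lines $H_i\ni p_i$ each avoiding $p_m$, together with a curve of degree $d-2-m\geq 0$ avoiding $p_m$; their union is a curve of degree $d-3$ through $p_1,\dots,p_{m-1}$ missing $p_m$, an immediate contradiction. This is precisely the first half of the paper's Lemma 2.4, whose second half then proves collinearity for $r+2\leq m\leq 2r+1$ by a clean induction (splitting off a maximal collinear subset), with no uniform position theorem.

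For the second statement, where $m=d-1$, your outline can be repaired without the unproved pencil claim: each fibre is collinear by Lemma 2.4 (since $d-1\leq 2(d-3)+1$ for $d\geq 4$), the residual $R_t=\ell_t\cap C-\Delta_t$ is a \emph{single} point depending algebraically on $t\in\mathbb{P}^1$, hence constant because $g(C)>0$; so all the $\ell_t$ pass through a fixed $q\in C$ and the pencil is the projection from $q$. Note that the order of implication is the reverse of yours: constancy of $R_t$ is what \emph{produces} the pencil, not a consequence of it. The paper itself runs everything through first order congruences of lines (Theorem 4.3 and Remark 4.5), which is overkill in $\mathbb{P}^2$ but is the template for its higher-dimensional results.
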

As it has been pointed out in \cite{Ci} and \cite{H2}, Noether's proof of the latter result contains a gap, but the correctness of the assertion is now proved in several different ways (cf. \cite{Ci, H2, Na} and also Remark \ref{remark PROOF OF NOETHER}).

On the other hand, the problem of establishing the gonality of a nonsingular curve contained in a larger projective space is still open.
However analogous results have been proved under further assumptions (see e.g. \cite{Bsl, Fk, HS, Mt}), and the gonality of the considered space curves is computed by projections from suitable multisecant lines.

In this paper we are instead aimed at extending Noether's theorem to nonsingular hypersurfaces of $\mathbb{P}^n$.
In particular, we prove analogous assertions for surfaces and threefolds.

\smallskip
Dealing with higher dimensional varieties, there are diverse attempts at extending the notion of gonality (cf. \cite{B} and \cite{Ko}).
In particular, the most natural one in the case of subvarieties of $\mathbb{P}^n$ is probably the so-called degree of irrationality.
Given a complex irreducible projective variety $X$ of dimension $k$, the \emph{degree of irrationality} of $X$ is defined as the integer
\begin{displaymath}
\degree_r(X):=\min \left\{m\in \mathbb{N} \left| \begin{array}{l}\textrm{there exists a dominant rational }\\ \textrm{map } X\dashrightarrow \mathbb{P}^k \textrm{ of degree } m\end{array}\right.\right\}.
\end{displaymath}
We remark that it does coincide with gonality in the case of curves.
Moreover, the degree of irrationality is still a birational invariant, and having $\degree_r(X)=1$ is equivalent to being a rational variety.
We would also like to recall that the degree of irrationality has been initially defined in an algebraic context in terms of fields extensions (\cite{MH1, MH2}), whereas it has been mainly studied from the geometric viewpoint in the case of surfaces (\cite{B, TY, Y1, Y2}).

It is worth noticing that if ${X\subset \mathbb{P}^n}$ is a $k$-dimensional integral variety, any projection of $X$ from an $(n-k-1)$-plane gives rise to a dominant rational map ${X\dashrightarrow \mathbb{P}^k}$.
It seems then natural to wonder whether Noether's theorem can be somehow extended in terms of degree of irrationality to higher dimensional subvarieties of $\mathbb{P}^n$.

In this direction, we achieve the following preliminary bound on degree of irrationality of hypersurfaces of general type.
\begin{theorem}\label{theorem CORTINIhypersurfaces}
Let ${X\subset \mathbb{P}^n}$ be a smooth hypersurface of degree ${d\geq n+2}$, with $n\geq 2$. Then
$$
{d-n+1\leq \degree_r(X)\leq d-1}.
$$
\end{theorem}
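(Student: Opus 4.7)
\smallskip\noindent\emph{Proof plan.} The upper bound is the easy half. Pick any $p \in X$ and project from $p$: a general line through $p$ meets $X$ transversally at $p$ (since $X$ is smooth there) and at $d - 1$ further points, so $\pi_p : X \dashrightarrow \mathbb{P}^{n-1}$ is a dominant rational map of degree $d - 1$, giving $\degree_r(X) \le d - 1$.

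For the lower bound, suppose $\phi: X \dashrightarrow \mathbb{P}^{n-1}$ is a dominant rational map of degree $m$; the aim is to show $m \ge d - n + 1$. The approach is Mumford's technique of induced differentials: after resolving the indeterminacy of $\phi$ one obtains a proper morphism $\tilde\phi : \tilde X \to \mathbb{P}^{n-1}$ with $\tilde X$ smooth projective and birational to $X$, and the trace map
$$
\tilde\phi_* : H^0(\tilde X, K_{\tilde X}) \longrightarrow H^0(\mathbb{P}^{n-1}, K_{\mathbb{P}^{n-1}}) = 0
$$
vanishes identically. By adjunction $K_X \cong \mathcal O_X(d - n - 1)$, and since $d \ge n + 2$ the space $H^0(X, K_X) = H^0(\tilde X, K_{\tilde X})$ is identified with polynomials $G$ of degree $d - n - 1$ in the ambient coordinates. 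Expressing the corresponding canonical forms via the Poincar\'e residue and writing the trace locally around a general $y \in \mathbb{P}^{n-1}$ with fibre $\tilde\phi^{-1}(y) = \{P_1, \dots, P_m\}$, the identity $\tilde\phi_*(\omega_G) = 0$ at $y$ reduces to
$$
\sum_{i=1}^m J_i(y)\, G(P_i(y)) = 0 \qquad \text{for every } G \text{ of degree } d - n - 1,
$$
where the weights $J_i(y)$ are the Jacobians of the local inverse branches of $\tilde\phi$ and are all non-zero at a generic $y$ where $\tilde\phi$ is \'etale. In other words, a general fibre of $\phi$ fails to impose $m$ independent conditions on the canonical linear system $|K_X|$.

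To finish I would contradict this dependence when $m \le d - n$. For any such $m$ distinct points $P_1, \dots, P_m \in \mathbb{P}^n$ the evaluation functionals on polynomials of degree $d - n - 1$ are linearly independent: to separate each $P_i$, take a product of $m - 1 \le d - n - 1$ hyperplanes each through one of the $P_j$ with $j \ne i$ but missing $P_i$, and multiply by an arbitrary form of complementary degree $(d - n - 1) - (m - 1) \ge 0$ non-vanishing at $P_i$. Since a smooth hypersurface in $\mathbb{P}^n$ is projectively normal, the restriction $H^0(\mathbb{P}^n, \mathcal O(d - n - 1)) \twoheadrightarrow H^0(X, K_X)$ is surjective, so independence persists on $X$ and contradicts the trace relation, forcing $m \ge d - n + 1$. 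The main technical hurdle, as I see it, is the rigorous set-up of Mumford's trace on a desingularisation of $\phi$---identifying $H^0(\tilde X, K_{\tilde X})$ with $H^0(X, K_X)$ through the exceptional locus and verifying that the Jacobian coefficients are genuinely non-zero at a general $y$---after which the counting step is entirely elementary.
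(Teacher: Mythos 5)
Your proposal is correct and follows essentially the same route as the paper: the upper bound by projection from a point, and the lower bound via Mumford's trace applied to the graph of $\phi$, which vanishes because $H^{n-1,0}(\mathbb{P}^{n-1})=0$, combined with adjunction $K_X\cong\mathcal{O}_X(d-n-1)$ and the separation of $m\leq d-n$ points by unions of hyperplanes times a complementary-degree form. The paper merely packages the resulting linear relation as a Cayley--Bacharach condition on the fibre (Proposition \ref{proposition LOPEZ PIROLA} and Lemma \ref{lemma LOPEZ PIROLA}) before running the identical counting argument.
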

\noindent We note that ${\degree_r(X)\leq d-1}$ is the obvious bound obtained by projecting $X$ from one of its points.
Moreover, when $n=2$ the statement is just the first assertion of Theorem \ref{theorem NOETHER}.

\smallskip
Apart from plane curves, the first case to investigate is given by smooth surfaces in $\mathbb{P}^3$.
In this setting we prove that Noether's theorem generically holds and we classify the exceptions.
Namely,
\begin{theorem}\label{theorem CORTINIsurfaces}
Let ${X\subset \mathbb{P}^3}$ be a smooth surface of degree ${d\geq 5}$. Then $\degree_r(X)= d-1$ unless one of the following occurs:
\begin{itemize}
  \item[(1)] $X$ contains a twisted cubic;
  \item[(2)] $X$ contains a line $\ell$ and a rational curve $C$ of degree $c$ such that $\ell$ is a ${(c-1)}$-secant line of $C$;
\end{itemize}
in these cases ${\degree_r(X)= d-2}$.\\
In particular, if $X$ is generic then ${\degree_r(X)= d-1}$, and---if in addiction ${d\geq 6}$---any dominant rational map ${X\dashrightarrow \mathbb{P}^{2}}$ of degree $d-1$ is obtained projecting $X$ from one of its points.
\end{theorem}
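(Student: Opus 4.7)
The plan is to follow the strategy outlined in the abstract: apply Mumford's technique of induced differentials to reduce the problem to the classification of first order congruences of lines in $\mathbb{P}^3$, and then match such congruences against the geometry of a smooth hypersurface $X\subset\mathbb{P}^3$.

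Assume $f\colon X\dashrightarrow\mathbb{P}^2$ is a dominant rational map of degree $m\leq d-2$, and let $\{x_1,\dots,x_m\}$ be a general fiber. Since $K_X\cong\mathcal{O}_X(d-4)$ and $K_{\mathbb{P}^2}$ has no global sections, Mumford's trace technique applied to $f$ and to any $\omega\in H^0(X,K_X)$ forces a Cayley--Bacharach--type condition: every polynomial of degree $d-4$ in $\mathbb{P}^3$ vanishing on $m-1$ of the $x_i$ must vanish on the last one as well. A residual analysis of this condition---exploiting that the fibers of $f$ vary in a $2$-dimensional family sweeping out $X$ and that $m\leq d-2<d$---should show that the fiber is collinear, so $m=d-2$ and the $x_i$ span a line $L$ meeting $X$ in the $d-2$ fiber points plus two further residual points $y_1(L),y_2(L)\in X$.

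As a general $x\in X$ lies on a unique such line (the one supporting its fiber), the resulting lines form a $2$-dimensional subvariety $\Lambda\subset\mathrm{Gr}(1,3)$ of order $1$: a first order congruence. By the classical classification of such congruences in $\mathbb{P}^3$, $\Lambda$ is either the star of lines through a fixed point $p$, the congruence of chords of a smooth twisted cubic $T$, or a congruence of lines meeting a line $\ell$ and a rational curve $C$ of degree $c$ with $\ell\cdot C=c-1$ residually. The star case makes $f$ the projection from $p$, a map of degree $d-1>m$, contradiction. In the twisted-cubic case, the focal curve of $\Lambda$ is $T$, and a focal-locus analysis identifies the residual pair $\{y_1(L),y_2(L)\}$ with $L\cap T$; since $\Lambda$ covers $T$, this forces $T\subset X$, giving case (1), and conversely the chord congruence of a twisted cubic contained in $X$ realises a degree $d-2$ map $X\dashrightarrow\mathbb{P}^2\cong\mathrm{Sym}^2 T$. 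The third alternative is handled analogously, the focal locus being $\ell\cup C$, and forces $\ell\cup C\subset X$, which is case (2).

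For the generic part of the ``moreover'' statement, standard dimension counts in the Hilbert scheme rule out the presence of a twisted cubic on a generic $X$ of degree $d\geq 5$ and, for $d\geq 6$, also the presence of a pair $(\ell,C)$ as in (2); hence $\degree_r(X)=d-1$. Any map $f\colon X\dashrightarrow\mathbb{P}^2$ of degree $d-1$ is then analysed by the same induced-differential technique: its general fibers are collinear of length $d-1$, and the resulting first order congruence leaves only a single residual intersection per line with $X$. The focal locus must therefore be supported on a single point, forcing $\Lambda$ to be a star and $f$ to be the projection from a point of $X$. The main obstacle I expect is the very first reduction: Mumford's Cayley--Bacharach condition a priori only says that the fiber fails to impose independent conditions on $|\mathcal{O}_{\mathbb{P}^3}(d-4)|$, and one has to exclude configurations supported on conics, plane cubics, or other higher-degree curves rather than on a line. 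This will require a delicate residual analysis combining the $2$-dimensional global geometry of the family of fibers, the smoothness of $X$, and numerical bounds relating $m$, $d$, and the arithmetic genus of any putative supporting curve.
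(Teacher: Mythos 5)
Your overall strategy is the same as the paper's (trace map $\Rightarrow$ Cayley--Bacharach $\Rightarrow$ collinear fibers $\Rightarrow$ first order congruence of lines $\Rightarrow$ classification), but your case analysis has a concrete gap: the classification of first order congruences of lines in $\mathbb{P}^3$ has \emph{four} cases, not the three you list. Besides the star through a point, the bisecants of a twisted cubic, and the lines meeting a line $\ell$ and a $(c-1)$-secant rational curve $C$, there is the congruence whose fundamental locus is a \emph{non-reduced line}: for each plane $\pi \supset \ell$ one takes the star of lines in $\pi$ through a point $\rho(\pi)\in\ell$. In that case the general line of the congruence meets the fundamental locus at a single point with multiplicity $2$, so if $\ell\subset X$ this configuration would also produce a map of degree $d-2$; concretely, the congruence would consist of the tangent lines of $X$ along $\ell$. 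The paper must (and does) rule this out by a separate argument: a Bertini analysis of the pencil cut on $X$ by planes through $\ell$ shows that if all these tangent stars formed an order-one congruence, the residual curves $C_H$ would meet $\ell$ in a single $(d-1)$-fold base point, forcing $X$ to be singular there. Without this step your dichotomy ``either (1) or (2)'' is not established.

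Two smaller points. First, the collinearity of the fiber, which you correctly flag as the main obstacle, is exactly where the technical work lies: the paper proves an inductive lemma showing that any set of $m$ points satisfying Cayley--Bacharach with respect to $|\mathcal{O}_{\mathbb{P}^n}(r)|$ has $m\geq r+2$, and is collinear whenever $m\leq 2r+1$ (here $r=d-4$, and $m\leq d-2\leq 2d-7$ for $d\geq 5$); the induction peels off a maximal collinear subset and applies Cayley--Bacharach for degree $r-1$ to the residual set. You would need to supply this. Second, for the generic statement you invoke ``standard dimension counts'' to exclude twisted cubics and the pairs $(\ell,C)$; since $C$ can have arbitrary degree $c$, what is actually needed is Xu's theorem that a very general surface of degree $d\geq 5$ in $\mathbb{P}^3$ contains no rational curves at all --- this is a genuine theorem, not a routine count. (Also note the exclusion of case (2) is needed already for $d=5$ to get $\degree_r(X)=d-1$ generically; the hypothesis $d\geq 6$ in the statement is only for the uniqueness of the degree $d-1$ maps.)
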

In particular, the dominant rational maps ${X\dashrightarrow \mathbb{P}^2}$ of degree $d-2$ are defined by using either the family of 2-secant lines of the rational normal cubic, or the family of lines meeting both $\ell$ and $C$ (cf. Examples \ref{example P^3 LINE and CURVE} and \ref{example P^3 TWISTED CUBIC}). Furthermore, the examples we provide assure the sharpness of the bound in Theorem \ref{theorem CORTINIhypersurfaces} when $n=3$.

We recall that having degree ${1\leq d \leq 3}$ is equivalent to rationality.
In order to establish the degree of irrationality of smooth surfaces of $\mathbb{P}^3$, it then remains to understand only the case $d=4$.
If $X$ is a smooth quartic surface, we have that $2\leq\degree_r(X)\leq 3$, and ${\degree_r(X)= d-1=3}$ when $X$ is generic (cf. \cite{Y1}).
Moreover, every smooth projective $K3$ surface containing a hyperelliptic curve of genus ${g\geq 2}$ is birationally equivalent to a branched double covering of $\mathbb{P}^2$ (\cite{D}), and the same happens as $X$ satisfies either condition (1) or (2) above, but it is not clear whether these conditions characterize the case $\degree_r(X)=2$ when $d=4$.

\smallskip
Turning to smooth threefolds in $\mathbb{P}^4$, we prove an analogous assertion and Noether's theorem extends as follows.
\begin{theorem}\label{theorem MAIN}
Let ${X\subset \mathbb{P}^4}$ be a smooth threefold of degree ${d\geq 7}$. Then $\degree_r(X)= d-1$ unless one of the following occurs:
\begin{itemize}
  \item[(1)] $X$ contains a non-degenerate rational scroll $S$ of degree $s$ and an ${(s-1)}$-secant line $\ell$ of $S$;
  \item[(2)] $X$ contains a non-degenerate rational surface $S$ of degree $s$ and a line $\ell\subset S$ such that the intersection of $S$ outside $\ell$ with the general hyperplane $H$ containing $\ell$ is a rational curve $C$ and $\ell$ is an $(s-2)$-secant line of it;
\end{itemize}
in these cases ${\degree_r(X)= d-2}$.\\
In particular ${\degree_r(X)= d-1}$ when $X$ is generic, and---if in addiction ${d\geq 8}$---any dominant rational map ${X\dashrightarrow \mathbb{P}^{3}}$ of degree $d-1$ is obtained projecting $X$ from one of its points.
\end{theorem}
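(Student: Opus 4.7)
Plan for the proof. The upper bound $\degree_r(X) \leq d-1$ is immediate by projecting $X$ from a smooth point, and the explicit constructions alluded to after the statement show that each of the configurations (1) and (2) produces a dominant rational map $X \dashrightarrow \mathbb{P}^3$ of degree $d-2$. The real content is the converse: for $d \geq 7$, any dominant rational map $f\colon X \dashrightarrow \mathbb{P}^3$ with $\deg(f) \leq d-2$ must arise from configuration (1) or (2), no such map of degree $\leq d-3$ can exist, and when $d \geq 8$ every degree-$(d-1)$ map is a projection. The plan, which parallels the surface argument of Theorem \ref{theorem CORTINIsurfaces}, combines Mumford's technique of induced differentials with the geometry of first-order line congruences in $\mathbb{P}^4$.

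Set $m = \deg(f)$ and let $\{x_1, \dots, x_m\} \subset X$ denote the generic fibre. Adjunction gives $\omega_X \cong \mathcal{O}_X(d-5)$, so every holomorphic $3$-form on $X$ is induced by a polynomial of degree $d-5$ on $\mathbb{P}^4$. Since $H^0(\mathbb{P}^3, \omega_{\mathbb{P}^3}) = 0$, Mumford's trace identity $\Tr_f(\omega) = 0$ translates, after twisting by auxiliary linear and quadratic forms, into linear conditions on the evaluation of polynomials of varying degrees at the fibre points. A parameter count---where the hypothesis $d \geq 7$ is essential---forces $x_1, \dots, x_m$ to be collinear in $\mathbb{P}^4$. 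As $p$ varies, the spanning lines $\ell_p$ sweep out a $3$-dimensional subvariety $\Sigma \subset G(1,\mathbb{P}^4)$, and the uniqueness of the fibre through a generic point of $X$ makes $\Sigma$ a \emph{first-order congruence} of lines in $\mathbb{P}^4$; each $\ell_p$ meets $X$ in the $m$ fibre points together with $d-m$ residual points, which trace out auxiliary subvarieties of $X$.

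The key step is then to invoke the classification of first-order congruences in $\mathbb{P}^4$, slightly improved here as announced in the abstract. The possible congruences split into two geometric families: those coming from a non-degenerate rational scroll $S \subset \mathbb{P}^4$ together with a distinguished multisecant line, and those coming from a non-degenerate rational surface $S \subset \mathbb{P}^4$ containing a line $\ell$ whose general hyperplane section cuts out a rational curve of which $\ell$ is a multisecant. Matching the numerical data of the residual loci ($d-m$ points per line) with the degree of $S$ and the secant order of $\ell$ forces $m = d-2$ together with configuration (1) or (2) respectively, and shows that no first-order congruence in $\mathbb{P}^4$ can realise $m = d-3$ once $d \geq 7$. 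The genericity of $\degree_r(X) = d - 1$ follows because containing a suitable rational scroll or rational surface with a multisecant line is a positive-codimension condition in the family of smooth degree-$d$ hypersurfaces.

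For the uniqueness of degree-$(d-1)$ maps when $d \geq 8$, the same chain applies with $m = d-1$: the induced-differential argument still forces the fibres to be collinear, and the associated first-order congruence has residual index $d-m = 1$. Inspecting the classification, the only congruence compatible with a single residual base point on $X$ is the star of all lines through a fixed point $x_0$, which must lie on $X$; hence $f$ is the projection from $x_0$. The main obstacle throughout is the third step: even granted the classification of first-order congruences in $\mathbb{P}^4$, one must carefully match the focal data of each admissible congruence to the numerical invariants dictated by $X$ and the residual intersection pattern, and distil the outcome into the clean statements (1)--(2); this matching, and the slight improvement of the congruence classification that it requires, is where the bulk of the work lies.
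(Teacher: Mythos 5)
Your overall architecture matches the paper's: trace map and the Cayley--Bacharach condition force the generic fibre to be collinear, the spanning lines form a first-order congruence $B_f$ with $\deg f = d-\delta_{B_f|X}$, and one then confronts the classification of first-order congruences in $\mathbb{P}^4$. However, the central step is both unperformed and misdescribed. You assert that ``the possible congruences split into two geometric families'' (scroll plus multisecant line, and surface containing a line). That is not the classification: the fundamental locus of a first-order congruence in $\mathbb{P}^4$ can be an integral surface (trisecant congruences), a union of two or three integral surfaces, a non-reduced surface, a surface together with a curve, or a single point, and the paper must run through \emph{all} of these configurations to decide which components can lie on $X$ with the right multiplicity. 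The two configurations in the statement are not the two families of congruences; they are what survives the elimination.

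The tool that drives that elimination is entirely absent from your proposal: by the Lefschetz theorem, every irreducible divisor on the smooth threefold $X\subset\mathbb{P}^4$ is a complete intersection, hence has degree a positive multiple of $d\geq 7$. This is what rules out planes, cubic scrolls, the projected Veronese surface, and (via the unisecant-plane-curve argument of Lemma \ref{lemma SCROLL}) scrolls with a unisecant plane section from lying on $X$, and it is also what kills the case $\delta_{B_f|X}=3$ (your claim that no congruence realises $m=d-3$). Without it, the ``matching of numerical data'' you invoke cannot be carried out. Finally, your genericity argument (``positive-codimension condition'') and your uniqueness argument for $d\geq 8$ are unsubstantiated as stated: a congruence with fundamental locus a surface plus a curve could a priori contribute $\delta_{B_f|X}=1$ through the curve alone, so the star of lines is not the only candidate by inspection. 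The paper closes both points by combining the Clemens--Voisin theorem (a very general hypersurface of degree $\geq 2n-1$ contains no rational curves) with the observation that every component of the fundamental locus of a non-star first-order congruence is covered by rational curves. You need these inputs, or substitutes for them, for the last two assertions of the theorem.
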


Therefore the latter result provides a characterization of smooth threefolds of degree $d\geq 7$ in $\mathbb{P}^4$ having degree of irrationality $d-2$, and the maps ${X\dashrightarrow \mathbb{P}^3}$ of degree $d-2$ are still given by the families of lines meeting the subvarieties listed above (cf. Examples \ref{example P^4 LINE and SURFACE} and \ref{example P^4 LINE in a SURFACE}).

Dealing with the degree of irrationality of smooth hypersurfaces of $\mathbb{P}^4$ we have that $X$ is rational as $d=1,2$, and it is well known since \cite{CG} that the cubic threefold has ${\degree_r(X)= 2}$.
On the other hand, we have just partial pieces of information when $X\subset \mathbb{P}^4$ has degree ${4\leq d\leq 6}$.
Finally, it would be interesting to understand whether examples of threefold of larger degree satisfying either condition (1) or (2) exist.

\smallskip
In order to prove our results, we follow the approach of \cite{LP} and we use Mumford's technique of induced differentials (cf. \cite[Section 2]{Mu}).
In particular, given a smooth hypersurface $X\subset \mathbb{P}^n$ of degree ${d\geq 2n-1}$, the general fiber of a dominant rational map ${X\dashrightarrow \mathbb{P}^{n-1}}$ of low degree turns out to consist of collinear points, by satisfying a condition of Cayley-Bacharach type.

Then we prove that any dominant rational map ${X\dashrightarrow \mathbb{P}^{n-1}}$ as above induces a first order congruence of lines of $\mathbb{P}^n$.
Moreover, we describe how the degree of the map is related to the number of components of the fundamental locus of the congruence contained in $X$ (see Theorem \ref{theorem MAPS AND CONGRUENCES}).
So we shift our problem to investigate congruences of lines of $\mathbb{P}^n$ of order one.

We recall that a \emph{congruence of $k$-planes} in $\mathbb{P}^n$ is a flat family of $k$-planes parameterized by a $(n-k)$-dimensional subvariety $B\subset \mathbb{G}(k,n)$ of the Grassmannian.
The \emph{order} of the congruence is defined as the number of $k$-planes passing through a general point of $\mathbb{P}^n$, and the \emph{fundamental locus} is the set of points lying on infinitely many lines of the family.
Congruences of linear subspaces of $\mathbb{P}^n$ are interesting geometric objects that have been deeply studied long since (see e.g. \cite{F,Ku,Ma,S}).
Furthermore, there are several recent works---as \cite{A,ABT,D1,R}---on this topic, and others exploiting congruences for different purposes as \cite{AF, CS,DM2}.
We note that the classification of first order congruences of lines in $\mathbb{P}^3$ started by Kummer has been now completed in dependence of the number and the dimension of the components of the fundamental locus  (cf. \cite{A,R}).
On the other hand, Marletta \cite{Ma} started a systematic study of first order congruences of lines of $\mathbb{P}^4$, and the third author corrected and improved his results in a series of papers \cite{D2,D5,D6,DM1}, providing an almost complete classification, which we improve further in the present paper (see Propositions \ref{proposition F THREE SURFACES} and \ref{proposition F TWO SURFACES}).

Dealing with the degree of irrationality of hypersurfaces, we thus exploit the description of first order congruences of lines in $\mathbb{P}^3$ and $\mathbb{P}^4$ to deduce Theorems \ref{theorem CORTINIsurfaces} and \ref{theorem MAIN} by discussing whenever the irreducible components of the fundamental locus may lie on a smooth hypersurface ${X\subset \mathbb{P}^n}$ of general type.

\smallskip
We would like to note that our argument applies also to hypersurfaces of higher dimensional projective spaces and---up to describe the first order congruences of lines in some $\mathbb{P}^n$---one could obtain further extensions of Noether's theorem.

Looking at every known example of first order congruences of lines, we point out that---except for the congruences of the lines passing through a fixed point---any irreducible component of the fundamental locus is covered by rational curves.
Moreover, the same holds in a lot of other cases which are not classified (see e.g. \cite[Theorem 7]{D2}).
Hence it is natural to conjecture that given a first order congruence of lines in $\mathbb{P}^n$ which is not the star of lines through a fixed point, every irreducible component of the fundamental locus contains rational curves.
On the other hand, a generic hypersurface ${X\subset \mathbb{P}^n}$ of degree ${d\geq 2n-1}$ does not contain rational curves (cf. \cite{Cl}).
Then we would have that such a variety $X$ cannot contain any irreducible component of the fundamental locus, and the argument we use would lead to the following.
\begin{conjecture}
The degree of irrationality of a generic hypersurface ${X\subset \mathbb{P}^n}$ of degree ${d\geq 2n-1}$ is ${\degree_r(X)=d-1}$.\\
If in addiction ${d\geq 2n}$, any dominant rational map ${X\dashrightarrow \mathbb{P}^{n}}$ of degree $d-1$ is obtained projecting $X$ from one of its points.
\end{conjecture}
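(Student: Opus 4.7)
The plan is to mimic the strategy used for Theorems \ref{theorem CORTINIsurfaces} and \ref{theorem MAIN}, now in arbitrary dimension, and to be explicit about the input needed from the theory of first order congruences.

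First, I would take a dominant rational map $\varphi\colon X\dashrightarrow \mathbb{P}^{n-1}$ of degree $m\leq d-2$ and apply Mumford's technique of induced differentials to the general fiber $\varphi^{-1}(p)=\{P_1,\dots,P_m\}$. Because $d\geq 2n-1$ the canonical bundle $K_X=\mathcal{O}_X(d-n-1)$ has sufficient positivity so that, exactly as in the proofs for $n=3$ and $n=4$, the differential forms on $X$ that descend to $\mathbb{P}^{n-1}$ force a Cayley--Bacharach type condition on $\varphi^{-1}(p)$; a Castelnuovo-style dimension count then implies that the $P_i$ are collinear. Letting $p$ vary, the spanning lines trace out a family of lines of $\mathbb{P}^n$ of order one, i.e.\ a first order congruence $B\subset \mathbb{G}(1,n)$, and $\varphi$ is reconstructed from $B$ by sending a general point of $X$ to the unique line of the family through it.

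Next, invoking Theorem \ref{theorem MAPS AND CONGRUENCES} I would translate the degree of $\varphi$ into intersection-theoretic information about the fundamental locus of the induced congruence: roughly, $m=d-\sum n_i$, where the $n_i$ count, with appropriate multiplicities, the irreducible components of the fundamental locus that lie on $X$. Thus either the congruence is the star of lines through a fixed point $P\in X$---in which case $\varphi$ is the projection from $P$ and $m=d-1$, excluded by our assumption $m\leq d-2$---or else the fundamental locus of $B$ has at least one irreducible component properly contained in $X$. Now I would invoke the conjectural structural statement recalled just before the conjecture, that every component of the fundamental locus of a non-star first order congruence is uniruled. Combined with Clemens's theorem \cite{Cl}, which says that a generic hypersurface $X\subset\mathbb{P}^n$ of degree $d\geq 2n-1$ contains no rational curves, this yields a contradiction, and hence $\degree_r(X)=d-1$. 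For the refined second assertion, if $m=d-1$ and $d\geq 2n$ one uses a slight sharpening---ruling out components of the fundamental locus inside $X$ by either Clemens's bound or a direct count of rational curves on $X$---to force $B$ to be the star through some $P\in X$; then $\varphi$ is the projection from $P$ itself, up to birational automorphism of $\mathbb{P}^{n-1}$.

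The main obstacle is clearly the congruence-theoretic input. Extending the Kummer--Marletta--De Poi classification of first order congruences of lines from $\mathbb{P}^3$ and $\mathbb{P}^4$ to arbitrary $\mathbb{P}^n$---or, more modestly, proving only the weaker uniruledness statement for the components of the fundamental locus in the non-star case---is a substantial open problem in the enumerative geometry of Grassmannians and is precisely what prevents the above argument from being a full proof. A secondary but nontrivial point is to verify carefully that the induced-differentials / collinearity step really goes through under the bound $d\geq 2n-1$ rather than under a stronger hypothesis: as the codimension of the fiber grows, the Cayley--Bacharach machinery becomes more delicate, and one may need additional numerical input from Macaulay's bound or from the geometry of the Hilbert scheme of $m$-point schemes on $X$.
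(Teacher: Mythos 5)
The statement you were given is stated in the paper as a \emph{conjecture}; the authors do not prove it, and your proposal is essentially a faithful reconstruction of the conditional argument they sketch in the introduction. Your route --- fibers of a dominant rational map of degree $m\leq d-2$ are collinear by the Cayley--Bacharach/null-trace argument, hence induce a first order congruence $B$ with $m=d-\delta_{B|X}$, hence some irreducible component of the fundamental locus must lie on $X$, which contradicts \cite{Cl} \emph{provided} every component of the fundamental locus of a non-star first order congruence in $\mathbb{P}^n$ is covered by rational curves --- is exactly the authors' intended reduction, and you correctly identify the unproven uniruledness statement for fundamental loci (equivalently, an extension of the $\mathbb{P}^3$/$\mathbb{P}^4$ classification to arbitrary $n$) as the one genuinely missing ingredient. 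So this is not a proof, and cannot be one given the current state of knowledge, but it is an accurate account of why the statement remains open.

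One correction to your closing remarks: the ``secondary'' concern about whether the induced-differentials/collinearity step survives under the bound $d\geq 2n-1$ is unfounded. Lemma \ref{lemma LOPEZ PIROLA} and Theorem \ref{theorem CORRESPONDENCE ON HYPERSURFACES} are proved for all $n$, and the numerics are exact: $m\leq d-2$ together with $d\geq 2n-1$ gives $m\leq 2d-2n-1$, which is precisely the collinearity threshold with respect to $\left|\mathcal{O}_{\mathbb{P}^n}(d-n-1)\right|$; likewise $m\leq d-1$ with $d\geq 2n$ for the refined second assertion. These are the cases $k=2$ and $k=1$ of the lemma preceding Theorem \ref{theorem MAPS AND CONGRUENCES}, so no Macaulay-type or Hilbert-scheme refinement is needed. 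The entire burden of the conjecture rests on the congruence-theoretic input, not on the Cayley--Bacharach machinery.
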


\smallskip
The plan of the paper is the following. In Section \ref{section CORRESPONDENCES} we introduce Mumford's trace map and we present the main results on correspondences with null trace on hypersurfaces.

In Section \ref{section CONGRUENCES} we turn to first order congruences of lines in $\mathbb{P}^n$.
Firstly, we recall some important properties and their classification in $\mathbb{P}^2$ and $\mathbb{P}^3$.
Then we focus on the case of $\mathbb{P}^4$, summarizing the known results about the description of first order congruences and providing a slight improvement on this topic.

Finally, Section \ref{section DEGREE OF IRRATIONALITY} concerns the degree of irrationality of hypersurfaces.
Initially, we prove Theorem \ref{theorem CORTINIhypersurfaces} and we present our result connecting dominant rational maps of hypersurfaces and first order congruences of lines in $\mathbb{P}^n$.
Then we prove Theorems \ref{theorem NOETHER}, \ref{theorem CORTINIsurfaces} and \ref{theorem MAIN}, and we present some remarks and examples on dominant rational maps of low degree.

\begin{notation}
We shall work throughout over the field $\mathbb{C}$ of complex numbers.
By \emph{variety} we mean a complete reduced algebraic variety over $\mathbb{C}$, unless otherwise stated.
When we speak of a \emph{smooth} variety, we always implicitly assume it to be irreducible.

Given a smooth variety $X$, we denote by $\omega_X$ the canonical sheaf of $X$, and $K_X$ denotes any divisor on $X$ such that ${\mathcal{O}_X(K_K)\cong \omega_X}$.

We say that a property holds for a \emph{general} point ${x\in X}$ if it holds on an open non-empty subset of $X$.
Moreover, we say that ${x\in X}$ is a \emph{very general} -~or \emph{generic}~- point if there exists a countable collection of proper subvarieties of $X$ such that $x$ is not contained in the union of those subvarieties.
\end{notation}

\section{Correspondences with null trace on hypersurfaces}\label{section CORRESPONDENCES}

Following \cite[Section 2]{LP} and \cite[Section 4]{B}, we would like to recall the basic properties of Mumford's induced differentials (cf. \cite[Section 2]{Mu}) rephrased in terms of correspondences.

\smallskip
Let $X$ and $Y$ be two projective varieties of dimension $k$, with $X$ smooth and $Y$ integral.
\begin{definition}\label{definition CORRESPONDENCE}
A \emph{correspondence of degree} $m$ \emph{on} $Y\times X$ is a reduced $k$-dimensional variety ${\Gamma\subset Y\times X}$ such that the projections ${\pi_1\colon \Gamma\longrightarrow Y}$, ${\pi_2\colon \Gamma\longrightarrow X}$ are generically finite dominant morphisms and ${deg\,\pi_1=m}$.
Moreover, if ${deg\,\pi_2=m'}$ we say that $\Gamma$ is a $(m,m')$-\emph{correspondence}.\\
Two correspondences $\Gamma\subset Y\times X$ and $\Gamma'\subset Y'\times X$ are said to be \emph{equivalent} if there exists a birational map ${\varphi\colon Y'\dashrightarrow Y}$ such that ${\Gamma'=\left(\varphi\times id_X\right)^{-1}\left(\Gamma\right)}$.
\end{definition}
So, let ${\Gamma\subset Y\times X}$ be a correspondence of degree $m$.
Let us consider the $m$-fold ordinary product ${X^m=X\times\ldots\times X}$ provided with the natural projections ${p_i\colon X^m\longrightarrow X}$, with ${i=1,\ldots,m}$.
Denoting by $S_m$ the $m$-th symmetric group, let ${X^{(m)}=X^m/S_m}$ be the $m$-fold symmetric product of $X$ and let ${\pi\colon X^m\longrightarrow X^{(m)}}$ be the quotient map.
We then define the set ${U:=\{y\in Y_{reg}\,|\,\dim\pi_1^{-1}(y)=0\}}$ and the morphism ${\gamma\colon U \longrightarrow X^{(m)}}$ given by ${\gamma(y):= P_1+\ldots +P_m}$, where ${\pi_1^{-1}(y)=\{(y,P_i)\,|\,i=1,\ldots,m\}}$.

Our first aim is to introduce the trace map of $\gamma$.
Given a holomorphic $k$-form ${\omega\in H^{k,0}(X)}$, let
\begin{equation*}
\omega^{(m)}:=\sum_{i=1}^m p_i^*\omega \in H^{k,0}(X^{m}).
\end{equation*}
As $\omega^{(m)}$ is invariant under the action of $S_m$, the morphism ${\gamma\colon U \longrightarrow X^{(m)}}$ induces canonically a $(k,0)$-form $\omega_{\gamma}$ on $U$ (cf. \cite[Section 2]{Mu}).
Then, we define the Mumford's \emph{trace map} of $\gamma$ as
\begin{displaymath}
\begin{array}{ccccl}
\Tr_{\gamma}\colon & H^{k,0}(X) & \longrightarrow & H^{k,0}(U) \\
 & \omega & \longmapsto & \omega_{\gamma}
\end{array}\, .
\end{displaymath}

We note that $\Tr_{\gamma}$ can be also described locally as follows.
Let ${V:=\{y\in U\,|\pi_1^{-1}(y)\textrm{ has } m \textrm{ distinct points}\}}$ and let
\begin{equation*}
X_0^{(m)}:=\pi\left(X^m-\bigcup_{i,j}\Delta_{i,j}\right),
\end{equation*}
where $\Delta_{i,j}$ is the $(i,j)$-diagonal of $X^{m}$, with ${i,j=1,\ldots, m}$ and ${i\neq j}$.
Moreover, let us consider the map
\begin{displaymath}
\begin{array}{cccc}
\delta_m\colon & H^{k,0}(X) & \longrightarrow & H^{k,0}(X_0^{(m)}) \\
 & \omega & \longmapsto & \pi_*(\omega^{(m)})
\end{array}\, ,
\end{displaymath}
i.e. $\omega^{(m)}$ is thought as a $(k,0)$-form on $X_0^{(m)}$.
Then ${\Image\,\gamma_{|V}\subset X_0^{(m)}}$ and
\begin{equation*}
{\Tr_{\gamma} = \gamma^*_{|V}\circ \delta_m}.
\end{equation*}

\smallskip
We recall the following definition (cf. \cite{GH1}).
\begin{definition}\label{definition CAYLEY-BACHARACH CONDITION}
Let $\mathcal{D}$ be a complete linear system on $X$.
We say that a $0$-cycle ${P_1+\ldots+P_m\in X^{(m)}}$ \emph{satisfies the Cayley-Bacharach condition with respect to} $\mathcal{D}$ if for every ${i=1,\ldots,m}$ and for any effective divisor ${D\in \mathcal{D}}$ passing through ${P_1,\ldots,\widehat{P}_i,\ldots,P_m}$, we have $P_i\in D$ as well.
\end{definition}
We then have the following result showing how the property of having null trace imposes strong conditions on the correspondence ${\Gamma\subset Y\times X}$ (cf. \cite[Proposition 4.2]{B}).
\begin{proposition}\label{proposition LOPEZ PIROLA}
Let $X$ and $Y$ be two projective varieties of dimension $k$, with $X$ smooth and $Y$ integral.
Let $\Gamma$ be a correspondence of degree $m$ on ${Y\times X}$ with null trace.
Let ${y\in Y_{reg}}$ such that ${\dim\pi_1^{-1}(y)=0}$ and let ${\pi_1^{-1}(y)=\{(y,P_i)\in\Gamma\,|\,i=1,\ldots,m\}}$ be its fiber.
Then the $0$-cycle ${P_1+\ldots+P_m}$ satisfies the Cayley-Bacharach condition with respect to the canonical linear series $|\omega_X|$, that is for every ${i=1,\ldots,m}$ and for any effective canonical divisor $K_X$ containing ${P_1,\ldots,\widehat{P}_i,\ldots,P_m}$, we have ${P_i\in K_X}$.
\end{proposition}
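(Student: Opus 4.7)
The plan is to localize the null trace hypothesis around a general point $y\in Y$ and translate it into a pointwise identity, from which the Cayley-Bacharach condition drops out immediately.

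\smallskip
\noindent\textbf{Step 1: Choose a good general point.} Since $\pi_1$ and $\pi_2$ are generically finite dominant, I would first restrict to a Zariski-dense open $V_0\subseteq Y_{\mathrm{reg}}$ on which $\pi_1$ has $m$ distinct reduced points in each fiber and both $\pi_1$ and $\pi_2$ are étale over these points. For $y\in V_0$ write $\pi_1^{-1}(y)=\{(y,P_1),\dots,(y,P_m)\}$. By étaleness of $\pi_1$, on a sufficiently small neighborhood $V\subset V_0$ of $y$ there exist $m$ analytic sections $\sigma_1,\dots,\sigma_m\colon V\to \Gamma$ with $\sigma_i(y)=(y,P_i)$, and the compositions $f_i:=\pi_2\circ\sigma_i\colon V\to X$ are local biholomorphisms at $y$ (because $\pi_2$ is also étale at $(y,P_i)$).

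\smallskip
\noindent\textbf{Step 2: Read the trace locally.} Using the local description $\Tr_\gamma=\gamma^*_{|V}\circ\delta_m$ recalled in the excerpt, I would match the definition of $\omega_\gamma$ through the quotient $\pi\colon X^m\to X^{(m)}$ with the sections $\sigma_i$: since on $V$ the ordered lift of $\gamma$ is $(f_1,\dots,f_m)\colon V\to X^m$, the $S_m$-invariant form $\omega^{(m)}=\sum_i p_i^*\omega$ descends so that
\[
\Tr_\gamma(\omega)|_V \;=\; \sum_{i=1}^m f_i^*\omega.
\]
The null trace hypothesis then becomes the identity
\[
\sum_{i=1}^m f_i^*\omega \;=\; 0 \qquad \text{on } V, \quad \text{for every } \omega\in H^{k,0}(X).
\]

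\smallskip
\noindent\textbf{Step 3: Derive the Cayley-Bacharach condition.} Fix an index $i$ and let $K_X\in|\omega_X|$ be an effective canonical divisor containing $P_1,\dots,\widehat{P}_i,\dots,P_m$. Let $\omega\in H^0(X,\omega_X)$ be a section whose divisor is $K_X$. For every $j\neq i$, the form $\omega$ vanishes at $P_j=f_j(y)$, so $f_j^*\omega$ vanishes at $y$. Evaluating the identity from Step 2 at $y$ leaves $f_i^*\omega(y)=0$, and since $(df_i)_y$ is invertible this forces $\omega(P_i)=0$, i.e.\ $P_i\in K_X$.

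\smallskip
\noindent\textbf{Main obstacle.} The only non-mechanical step is the local formula in Step 2: one must carefully chase the symmetrization through the quotient $X^m\to X^{(m)}$ to verify that the intrinsic form $\omega_\gamma$ pulls back along the unordered map $\gamma$ to the symmetric sum $\sum_i f_i^*\omega$ in a neighborhood where $\pi_1$ is étale. Everything else—choosing $y$ sufficiently general, and producing the vanishing of $\omega$ at $P_i$ from that of $f_i^*\omega$ at $y$—is formal once this identification is in hand.
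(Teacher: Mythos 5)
Your argument is the standard local computation behind this proposition; note that the paper itself does not prove it but quotes it from [B, Proposition 4.2] (the technique going back to Mumford's induced differentials and to Lopez--Pirola), so there is no in-paper proof to match against. Steps 2 and 3 are correct as written: on the locus where $\pi_1$ is unramified the ordered lift $(f_1,\dots,f_m)$ of $\gamma$ exists, one has $\gamma^*\delta_m(\omega)=\sum_i f_i^*\omega$ because $\pi$ is an \'etale $S_m$-quotient off the diagonals and $\omega^{(m)}$ is invariant, and evaluating at $y$ together with the invertibility of $(df_i)_y$ gives the Cayley--Bacharach implication.

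The one genuine shortfall is in Step 1: you prove the statement only for $y$ in a dense open subset $V_0$ (fiber consisting of $m$ distinct reduced points, with $\Gamma$ smooth and $\pi_2$ unramified at each of them), whereas the proposition asserts the conclusion for \emph{every} $y\in Y_{\mathrm{reg}}$ with $\dim\pi_1^{-1}(y)=0$. At a point where the fiber is non-reduced or where $\pi_2$ ramifies, your sections $\sigma_i$ and the invertibility of $(df_i)_y$ are unavailable; handling such $y$ is precisely where Mumford's technique enters, namely the fact that the induced form $\omega_\gamma$ extends holomorphically from $V$ to all of $U$, followed by a decomposition of the trace into local traces supported near the distinct points of the fiber and a separate analysis of each of them (and the needed vanishing of the local trace attached to $P_j$ does not follow merely from $\omega(P_j)=0$ once $\pi_1$ ramifies, as the local model $x^2=y$ with $\omega$ vanishing to order one shows). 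For the way the proposition is used in this paper only general $y$ is ever invoked, so your version suffices for the applications, but as a proof of the statement as literally given it is incomplete.
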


\smallskip
We want now to study how the property of having null trace influences the geometry of a correspondence in the case of hypersurfaces of $\mathbb{P}^n$.
We start with the following result extending \cite[Lemma 2.5]{LP} to sets of points in $\mathbb{P}^n$ with $n\geq 2$.
\begin{lemma}\label{lemma LOPEZ PIROLA}
Let ${n\geq 2}$ and let ${Z=\{P_1,\ldots,P_m\}\subset \mathbb{P}^n}$ be a set of distinct points satisfying the Cayley-Bacharach condition with respect to ${\left|O_{\mathbb{P}^n}(r) \right|}$ for some ${r\geq 1}$.
Then ${m\geq r+2}$.
Moreover, if ${m\leq 2r+1}$ then $Z$ lies on a line.
\begin{proof}
We argue by contradiction and we suppose that ${m\leq r+1}$.
By the Cayley-Bacharach condition, any hypersurface of degree $r$ passing through $m-1$ of the $P_i$'s, must contain all of them.
So, let us consider ${m-1}$ hyperplanes ${H_1,\ldots,H_{m-1}\subset \mathbb{P}^n}$ such that ${P_i\in H_i}$ and ${P_m\not\in H_i}$ for any ${i=1,\ldots,m-1}$.
Moreover, let $F_{r-m+1}$ be a hypersurface of degree ${r-m+1}$ not containing ${P_m}$, with the convention ${F_{r-m+1}=\emptyset}$ when ${r=m-1}$.
Therefore ${F_{r-m+1}\cup H_1\cup\ldots\cup H_{m-1}}$ is a hypersurface of degree $r$ containing all but one the $P_i$'s, a contradiction.

Then let us assume ${r+2\leq m\leq 2r+1}$.
If ${r=1}$, then $m=3$ and any hyperplane containing $P_1$ and $P_2$ must contain $P_3$ as well. Thus the $P_i$'s are linearly dependent and hence collinear.

On the other hand, let ${r\geq 2}$ and let $h$ be the maximum number of collinear points of $Z$.
Let ${Z'\subset Z}$ be a subset consisting of $h$ points and let ${\ell\subset \mathbb{P}^n}$ be a line such that ${Z=Z'\amalg Z''}$, with ${Z'\subset \ell}$ and ${\ell\cap Z''=\emptyset}$.

By contradiction, we suppose that ${Z''\neq \emptyset}$. We claim that $Z''$ satisfies the Cayley-Bacharach condition with respect to ${\left|O_{\mathbb{P}^n}(r-1) \right|}$.
Indeed, for any $P\in Z''$ and for any hypersurface $F_{r-1}$ of degree ${r-1}$ containing ${Z''\smallsetminus \{P\}}$, we can choose a hyperplane $H$ such that ${\ell\subset H}$ and $P\not \in H$; since $Z$ satisfies the Cayley-Bacharach condition with respect to ${\left|O_{\mathbb{P}^n}(r) \right|}$, and ${F_{r-1}\cup H}$ is a hypersurface of degree $r$ containing ${Z''\smallsetminus \{P\}}$, we have that $P\in F_{r-1}$ as claimed.

Then the cardinality of $Z''$ is $m-h\geq r+1$ by the first part of the proof.
Furthermore, by induction on $r$ we have that $Z''$ lies on a line $\ell''\subset \mathbb{P}^n$.
As $h$ is the maximum number of collinear points of $Z$, we have that ${m-h\leq h}$.
Therefore ${r+1\leq m-h\leq \frac{m}{2}\leq r+\frac{1}{2}}$, a contradiction.
Thus ${Z''=\emptyset}$ and the whole $Z$ lies on the line $\ell$.
\end{proof}
\end{lemma}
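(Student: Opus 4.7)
The plan is to handle the two assertions separately. For the lower bound $m \geq r+2$, I would argue by contradiction: assuming $m \leq r+1$, Cayley--Bacharach says any degree-$r$ hypersurface through $P_1,\ldots,P_{m-1}$ must also pass through $P_m$, so the strategy is to exhibit one that does not. Since $n \geq 2$ and the points are distinct, for each $i < m$ I can pick a hyperplane $H_i \ni P_i$ with $P_m \notin H_i$; the union $H_1 \cup \cdots \cup H_{m-1}$ has degree $m - 1 \leq r$, and I would pad with an auxiliary hypersurface of degree $r - m + 1$ avoiding $P_m$ (or nothing, when $r = m-1$) to reach degree exactly $r$, yielding the desired contradiction.

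For the collinearity assertion, the plan is induction on $r$. The base case $r = 1$ forces $m = 3$, and Cayley--Bacharach with respect to $|O_{\mathbb{P}^n}(1)|$ says every hyperplane through $P_1, P_2$ must contain $P_3$; intersecting over all such hyperplanes yields $P_3 \in \overline{P_1 P_2}$, so $Z$ is collinear. For the inductive step, I would let $h$ be the maximum number of collinear points of $Z$, supported on a line $\ell$ via a subset $Z' \subset \ell$ with $|Z'| = h$, and set $Z'' := Z \setminus Z'$. The goal is to derive a contradiction from $Z'' \neq \emptyset$.

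The crucial step, which I expect to be the main obstacle, is to show that $Z''$ itself satisfies the Cayley--Bacharach condition with respect to $|O_{\mathbb{P}^n}(r-1)|$, so that the inductive hypothesis becomes applicable. My idea is to lift a degree-$(r-1)$ hypersurface for $Z''$ to a degree-$r$ hypersurface for $Z$ by wedging in a hyperplane containing $\ell$: given $P \in Z''$ and a hypersurface $F$ of degree $r - 1$ through $Z'' \setminus \{P\}$, I pick a hyperplane $H \supset \ell$ with $P \notin H$; then $F \cup H$ is a degree-$r$ hypersurface through $Z \setminus \{P\}$, so Cayley--Bacharach for $Z$ forces $P \in F \cup H$, and $P \notin H$ gives $P \in F$. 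Once this claim is established, the first part of the lemma applied to $Z''$ gives $|Z''| \geq r + 1$, induction then gives that $Z''$ lies on a line, and maximality of $h$ forces $|Z''| \leq h$. Combining these, $r + 1 \leq m - h \leq h$, whence $m \geq 2(r+1) = 2r + 2$, contradicting the hypothesis $m \leq 2r + 1$.
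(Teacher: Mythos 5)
Your proposal is correct and follows essentially the same route as the paper's proof: the same hyperplane-union construction for the bound $m\geq r+2$, and the same induction on $r$ with the key step of showing that the residual set $Z''$ inherits the Cayley--Bacharach condition in degree $r-1$ by adjoining a hyperplane through $\ell$ avoiding the chosen point. The only cosmetic difference is in the final arithmetic (you write $m\geq 2r+2$ directly rather than $r+1\leq m-h\leq m/2\leq r+\tfrac12$), and you are in fact slightly more careful than the paper in noting that $F\cup H$ contains all of $Z\smallsetminus\{P\}$, not merely $Z''\smallsetminus\{P\}$.
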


Then the following result descends straightforwardly as a corollary.
\begin{theorem}\label{theorem CORRESPONDENCE ON HYPERSURFACES}
Let ${X\subset \mathbb{P}^n}$ be a smooth hypersurface of degree ${d\geq n+2}$.
For some integral $(n-1)$-dimensional variety $Y$, let ${\Gamma\subset Y\times X}$ be a correspondence of degree $m$ with null trace.
Let ${y\in Y_{reg}}$ such that ${\dim\pi_1^{-1}(y)=0}$ and let ${\pi_1^{-1}(y)=\{(y,P_i)\in\Gamma\,|\,i=1,\ldots,m\}}$ be its fiber.\\
Then $m\geq d-n+1$, and if $m\leq 2d-2n-1$ the $0$-cycle ${P_1+\ldots+P_m}$ consists of collinear points.
\begin{proof}
By Proposition \ref{proposition LOPEZ PIROLA}, the set ${\{P_1,\ldots,P_m\}\subset X}$ satisfies the Cayley-Bacharach condition with respect to the canonical linear series ${\left|\omega_X\right|\cong \left|\mathcal{O}_{\mathbb{P}^n}(d-n-1)\right|}$.
As ${d-n-1\geq 1}$, Lemma \ref{lemma LOPEZ PIROLA} assures that ${m\geq (d-n-1)+2}$ and if in addiction ${m\leq 2(d-n-1)+1}$, the $P_i$'s are collinear.
\end{proof}
\end{theorem}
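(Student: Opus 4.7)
The plan is to obtain this as an immediate corollary of the two preceding results, using only the adjunction formula to convert the canonical linear series on $X$ into the linear series of degree-$(d-n-1)$ hypersurfaces of $\mathbb{P}^n$. I would proceed as follows.

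First, since $\pi_1\colon \Gamma \longrightarrow Y$ is generically finite of degree $m$ and $y\in Y_{reg}$ satisfies $\dim\pi_1^{-1}(y)=0$, Proposition \ref{proposition LOPEZ PIROLA} applies directly and tells us that the $0$-cycle $P_1+\ldots+P_m$ satisfies the Cayley--Bacharach condition with respect to $|\omega_X|$. Then I would use adjunction for the smooth hypersurface $X\subset\mathbb{P}^n$ of degree $d$, which gives $\omega_X\cong \mathcal{O}_X(d-n-1)$, so that setting $r:=d-n-1$ every canonical divisor on $X$ is cut out by some degree-$r$ hypersurface of $\mathbb{P}^n$.

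Second, I would promote the Cayley--Bacharach property from $|\omega_X|$ to $|\mathcal{O}_{\mathbb{P}^n}(r)|$. Concretely, if $F\subset \mathbb{P}^n$ is a degree-$r$ hypersurface passing through $P_1,\ldots,\widehat{P}_i,\ldots,P_m$, there are two cases: either $X\subset F$, in which case $P_i\in F$ trivially, or $F$ cuts on $X$ a canonical divisor containing $P_1,\ldots,\widehat{P}_i,\ldots,P_m$, and then Proposition \ref{proposition LOPEZ PIROLA} forces $P_i\in F$. Hence the set $\{P_1,\ldots,P_m\}\subset\mathbb{P}^n$ satisfies the Cayley--Bacharach condition with respect to $|\mathcal{O}_{\mathbb{P}^n}(r)|$ with $r=d-n-1\geq 1$ (this is where the hypothesis $d\geq n+2$ enters).

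Finally I would invoke Lemma \ref{lemma LOPEZ PIROLA} applied to $Z=\{P_1,\ldots,P_m\}$ with this value of $r$. The first assertion of the lemma yields $m\geq r+2=d-n+1$, and the second assertion says that if moreover $m\leq 2r+1 = 2d-2n-1$, then the points $P_1,\ldots,P_m$ are collinear, which is exactly the claim.

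There is really no obstacle here; the content of the theorem lies entirely in Proposition \ref{proposition LOPEZ PIROLA} and Lemma \ref{lemma LOPEZ PIROLA}, and the only small point to verify is the transfer of the Cayley--Bacharach property from canonical divisors on $X$ to ambient degree-$r$ hypersurfaces, which is immediate because a degree-$r$ hypersurface either contains $X$ or cuts out a canonical divisor on it.
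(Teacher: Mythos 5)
Your proposal is correct and follows exactly the paper's route: Proposition \ref{proposition LOPEZ PIROLA} gives the Cayley--Bacharach condition for $|\omega_X|$, adjunction identifies this with the condition for $|\mathcal{O}_{\mathbb{P}^n}(d-n-1)|$, and Lemma \ref{lemma LOPEZ PIROLA} with $r=d-n-1$ gives both conclusions. The only difference is that you spell out the transfer step (a degree-$r$ hypersurface either contains $X$ or cuts a canonical divisor), which the paper leaves implicit in the isomorphism $\left|\omega_X\right|\cong \left|\mathcal{O}_{\mathbb{P}^n}(d-n-1)\right|$.
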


\begin{remark}
In \cite{B} it has been introduced another attempt at extending the notion of gonality to higher dimensional varieties.
Given a projective variety $X$ of dimension $k$, the \emph{degree of gonality} $\degree_o(X)$ of $X$ is defined as the minimum gonality of curves passing through the generic point of $X$.\\
When ${X\subset \mathbb{P}^3}$ is a smooth surface of degree ${d\geq 5}$, then $\degree_o(X)=d-2$ by \cite[Corollary 1.7]{LP}.
More generally, the degree of gonality of a smooth hypersurface ${X\subset \mathbb{P}^n}$ of degree ${d\geq n+2}$ is ${d-n+1\leq\degree_o(X)\leq d-2}$.
Indeed, if $\degree_o(X)=m$ there exists a correspondence with null trace ${\Gamma\subset Y\times X}$ of degree $m$, where $Y$ is an appropriate ruled $k$-dimensional variety (cf. \cite[Example 4.7]{B}).
On the other hand, let $T_xX$ be the tangent space of $X$ at a generic point ${x\in X}$. Then the plane curve of degree $d$ cut out on $X$ by a general plane of $T_xX$ through $x$ is singular at $x$, and hence its gonality is at most $d-2$.
\end{remark}

To conclude this section we present one of the simplest examples of correspondences with null trace on hypersurfaces, which shall be useful to deal with the study of degree of irrationality.
\begin{example}\label{example CORRESPONDENCE INDUCED BY DOMINANT RATIONAL MAPS}
Let ${X\subset \mathbb{P}^n}$ be a smooth hypersurface and let ${f\colon X\dashrightarrow \mathbb{P}^{n-1}}$ be a dominant rational map of degree $m$.
Then the graph of $f$
\begin{equation}\label{equation CORRESPONDENCE INDUCED BY DOMINANT RATIONAL MAPS}
{\Gamma_f:= \overline{\left\{(y,P)\in \mathbb{P}^{n-1}\times X\left| f(P)=y\right.\right\}}}
\end{equation}
is a $(m,1)$-correspondence on ${\mathbb{P}^{n-1}\times X}$ with null trace.
To see this fact, we note that the fiber ${\pi_1^{-1}(y)}$ over a generic point ${y\in \mathbb{P}^{n-1}}$ consists of $m$ distinct points ${(y,P_1),\ldots,(y,P_m)}$, where ${\left\{P_1,\ldots,P_m\right\}=f^{-1}(y)}$. Therefore $\Gamma_f$ is a reduced variety and $\deg\pi_1=m$.
On the other hand, given a generic point ${P\in X}$, we have ${\pi_2^{-1}(P)=\left\{\left(f(P),P\right)\right\}}$ and hence $\deg\pi_2=1$. Moreover, both the projections are dominant because $f$ is.
To check that the correspondence $\Gamma_f$ has null trace, it suffices to note that the indeterminacy locus of the map ${\gamma\colon \mathbb{P}^{n-1}\dashrightarrow X^{(m)}}$ defined above has codimension greater than one, and $H^{n-1,0}(\mathbb{P}^{n-1})$ is trivial.

Vice versa, any $(m,1)$-correspondence ${\Gamma\subset \mathbb{P}^{n-1}\times X}$ defines a degree $m$ dominant rational map ${f_{\Gamma}\colon X\dashrightarrow \mathbb{P}^{n-1}}$, by sending a generic point ${P\in X}$ to ${\left(\pi_1\circ \pi_2^{-1}\right)(P)\in \mathbb{P}^{n-1}}$.

Moreover, it is immediate to check that the map induced by $\Gamma_f$ is $f$ itself, and the correspondence induced by $f_{\Gamma}$ is $\Gamma$ as well.
\end{example}

\section{First order congruences of lines in $\mathbb{P}^n$}\label{section CONGRUENCES}

In order to deal with first order congruences of lines in $\mathbb{P}^n$, let us firstly recall the definition and some important properties.

\smallskip
Let $\mathbb{G}(1,n)$ be the Grassmannian of lines in $\mathbb{P}^n$ and given a point ${b\in \mathbb{G}(1,n)}$, let us denote by $\ell_b\subset \mathbb{P}^n$ the corresponding line.
\begin{definition}
A \emph{congruence of lines} in $\mathbb{P}^n$ is a flat family ${\Lambda\longrightarrow B}$ obtained as the pullback of the universal family under the desingularization $B\longrightarrow B'$ of an irreducible ${(n-1)}$-dimensional subvariety ${B'\subset \mathbb{G}(1,n)}$; the \emph{order} of the congruence is the number of lines of the family passing through the general point of $\mathbb{P}^n$.
\end{definition}
Given a congruence of lines ${\Lambda\longrightarrow B}$ in $\mathbb{P}^n$, there is a natural identification with the incidence variety ${\Lambda:=\left\{(b,P)\in B\times \mathbb{P}^n\left|P\in \ell_b\right.\right\}}$, hence we shall often refer to $B$ as the congruence of lines itself.
Let
\begin{equation}\label{equation INCIDENCE VARIETY}
\begin{CD}
{\Lambda} @. \subset B\times \mathbb{P}^n @>{\phi}>> \mathbb{P}^n \\
 @VVV \\
 B
\end{CD}
\end{equation}
be the restrictions of the projection maps. If the map $\phi$ is not surjective, the congruence has order zero and $\phi(\Lambda)$ is covered by lines.
Otherwise, the order of the congruence is positive and it equals ${\deg \phi}$.
\begin{definition}
A point ${P\in \mathbb{P}^n}$ is a \emph{fundamental point} of the congruence if ${\dim\phi^{-1}(P)>0}$, and the set $F$ of fundamental points is called \emph{fundamental locus}.
If $R\subset \Lambda$ is the ramification divisor of $\phi$, we say that its schematic image $\Phi:=\phi_*R$ is the \emph{focal locus} of the congruence and the points of $\Phi$ are called \emph{focal points}.
\end{definition}

\begin{remark}\label{remark TANGENT}
The fundamental locus is contained in the focal locus, with ${\dim F\leq n-2}$ and ${\dim\Phi\leq n-1}$. Moreover, if $X\subset \Phi$ is an irreducible component of dimension $n-1$, then any line of the congruence is tangent to $X$ at every intersection point which is focal but not fundamental (cf. \cite[Theorem 1.10]{D1} and \cite[Theorem 2.4]{DM1}).
\end{remark}

\begin{remark}\label{remark FIRST ORDER}
If ${\Lambda\longrightarrow B}$ is a first order congruence, then the fundamental and the focal locus coincide set-theoretically (cf. \cite[Corollary 2.6]{D5}), and both $\Lambda$ and $B$ are rational varieties (see \cite[Theorem 7]{D2}).
\end{remark}

\smallskip
Now we turn to the classification of first order congruences of lines in low dimensional projective spaces.

Clearly, a congruence of lines of $\mathbb{P}^2$ is parameterized by a curve $B$ lying on the dual plane $\mathbb{G}(1,2)\cong\mathbb{P}^{2}$, and the order of the congruence equals the degree of such a curve.
In particular, a curve $B\subset\mathbb{G}(1,2)$ is a first order congruence of lines of $\mathbb{P}^2$ if and only if $B$ parameterizes the star of lines through a point $F\in \mathbb{P}^2$, which is the fundamental locus.

On the other hand, the first order congruences of lines of $\mathbb{P}^3$ are classified by the following result (see e.g. \cite[Theorem 2.1]{A} and \cite[Theorem 0.1]{D4}).
\begin{theorem}\label{theorem CONGRUENCES OF P^3}
Let ${B\subset\mathbb{G}(1,3)}$ be a surface.
Then $B$ is a first order congruence of lines in $\mathbb{P}^3$ if and only if the fundamental locus $F$ is one of the following:
\begin{itemize}
  \item[(a)] a point, where $B$ is the star of lines through $F$;
  \item[(b)] a twisted cubic, where $B$ is the family of bisecant lines of $F$;
  \item[(c)] a non-degenerate reducible curve consisting of a rational curve $C$ of degree $c$ and a ${(c-1)}$-secant line $\ell$, where $B$ is the family of lines meeting both $C$ and $\ell$;
  \item[(d)] a non-reduced line $\ell$ and $B$ is the family of lines $\displaystyle \bigcup_{\pi\in B_\ell}B_{\rho(\pi)}$, where $B_\ell\subset \mathbb{G}(2,3)$ is the pencil of planes containing $\ell$, ${\rho\colon B_\ell\longrightarrow \ell}$ is a non-constant map, and for any plane $\pi\in B_\ell$, $B_{\rho(\pi)}\subset \mathbb{G}(1,3)$ is the star of lines on $\pi$ passing through the point $\rho(\pi)\in \ell$.
  \end{itemize}
\end{theorem}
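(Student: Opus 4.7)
The plan is to prove both implications. The ``if'' direction is a direct verification in each case. In (a), through any $P\neq F$ there is the unique line $\overline{PF}$. In (b), it is classical that through a general point of $\mathbb{P}^3$ passes a unique chord of a twisted cubic. For (c), given a general $P\in\mathbb{P}^3$, the plane $\pi$ spanned by $P$ and $\ell$ meets $C$ in $c$ points, exactly $c-1$ of which lie on $\ell$; the residual point $Q\in C\smallsetminus \ell$ provides the unique line $\overline{PQ}$ of the congruence through $P$. Case (d) is verified by construction: through a general $P$ there is a unique plane $\pi\in B_\ell$ containing $P$, and the line $\overline{P\rho(\pi)}\subset \pi$ is the unique member of $B$ through $P$.

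For the converse, let $B$ be a first order congruence with fundamental locus $F$. By Remark \ref{remark FIRST ORDER} the focal and fundamental loci coincide set-theoretically, so $F$ has no divisorial component; in particular $\dim F\leq 1$. I would split according to $\dim F$ and the number and nature of the components. If $\dim F=0$, the unique line $\ell_P\in B$ through a general $P$ is forced by the focal analysis to meet $F$, and an irreducibility/incidence count on $B$ shows that $F$ must consist of a single point, yielding (a). If $\dim F=1$ with a single reduced irreducible component $C$, use the rationality of $B$ (Remark \ref{remark FIRST ORDER}) together with Schubert calculus in $\mathbb{G}(1,3)$, writing $[B]=\sigma_{1,1}+\beta\sigma_{2}$, to bound $\deg C$; the requirement that through a general $P$ exactly one bisecant or multi-secant of $C$ passes then pins $C$ down to a twisted cubic, giving (b).

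If $F$ is reducible of dimension $1$, pick two components and notice that, for a general $P$, the unique line $\ell_P\in B$ must meet each of them. Taking a line component $\ell\subset F$ and setting $\pi:=\overline{P,\ell}$, the plane section $\pi\cap F$ must be collinear with $P$ along $\ell_P$; an intersection-theoretic count in $\pi$ then shows that the residual component $C$ of $F$ is a rational curve of some degree $c$ having $\ell$ as a $(c-1)$-secant, and that no further components are possible, exactly the configuration of (c).

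The main obstacle is case (d): here $F$ is set-theoretically a single line, yet $B$ is not the star of lines through a point, so naive intersection counts fail. To handle this I would exploit the focal/ramification analysis of $\phi$ along $\ell$, showing that $\phi$ is ramified with $\ell$ itself as a fibre of positive multiplicity, and then reconstruct a map $\rho\colon B_\ell\longrightarrow \ell$ by recording, plane by plane in the pencil $B_\ell$, the accumulation point on $\ell$ of the members of $B$ lying in that plane. Verifying that every non-constant such $\rho$ produces an admissible first order congruence, and that no further exotic configurations slip through, is the delicate part carried out in \cite{A} and \cite{D4}.
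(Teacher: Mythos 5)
First, a point of comparison: the paper does not prove Theorem \ref{theorem CONGRUENCES OF P^3} at all; it quotes it as a known classification, citing \cite[Theorem 2.1]{A} and \cite[Theorem 0.1]{D4}. So there is no internal proof to measure yours against, and your proposal has to stand on its own. Your verification of the ``if'' direction is correct and is the standard one. The ``only if'' direction, however, is a plan rather than a proof, and it omits the one technical fact on which the entire classification rests: for a congruence of order one in $\mathbb{P}^n$ the focal locus cuts out on each line of the family a divisor of degree exactly $n-1$ --- hence of degree $2$ here --- and by Remark \ref{remark FIRST ORDER} this focal divisor is supported on the fundamental locus. That single fact is what makes every count you invoke actually work: it forces every line of $B$ to meet $F$ (your case $\dim F=0$), it bounds the number of irreducible components of a one-dimensional $F$ by two, it identifies $B$ with a family of scheme-theoretic bisecants of $F$ (so that the irreducible case becomes the classification of curves with one apparent double point), and it is what produces the length-two contact along $\ell$ in case (d). Without stating and using it, phrases such as ``forced by the focal analysis'' and ``an intersection-theoretic count in $\pi$ then shows'' are placeholders, not arguments.

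Second, the two genuinely hard steps are precisely the ones you leave to the literature. That an irreducible fundamental curve must be a twisted cubic is the classical one-apparent-double-point theorem and requires excluding degenerate, singular and higher-degree candidates; your Schubert-calculus remark does not by itself bound $\deg C$, and note that in the usual convention $[B]=a\sigma_2+b\sigma_{1,1}$ with $a$ the order, so writing $[B]=\sigma_{1,1}+\beta\sigma_2$ appears to swap order and class. In the reducible case you simply \emph{assume} one component is a line; showing this requires a projection count from a general point eliminating, e.g., two conics meeting in three points or a conic and a cubic meeting in five points. Finally, case (d) you explicitly defer to \cite{A} and \cite{D4}. What you have written is therefore an accurate reduction of the statement to the results the paper already cites --- a reasonable reading of how the paper uses the theorem, but not an independent proof of it.
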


In $\mathbb{P}^4$ the situation is more involved and the classification is not complete.
In Table \ref{table CONGRUENCES OF P^4} below we list all the possible configurations of the focal locus $F$ of a first order congruence of lines $B$ in $\mathbb{P}^4$, and we summarize the known results in each case.

\begin{remark}\label{remark RATIONAL CURVES}
Given a first order congruence of lines in $\mathbb{P}^4$ which is not the star of lines through a point, the results listed in Table \ref{table CONGRUENCES OF P^4} assure that every irreducible component of the fundamental locus is covered by rational curves.
\end{remark}

\begin{table}[!h]
\noindent
\begin{tabularx}{\columnwidth}{|X|X|X|}
  \hline
  \textbf{Fundamental locus} & \textbf{Congruence} & \textbf{Remarks} \\ \hline
  $F$ is an integral surface & trisecant lines to $F$ & $F$ is not a complete intersection (cf. \cite[Theorem 1.3]{D3}) - classified in \cite[Theorem 1.1]{D5}  \\ \hline
  $F=F_1\cup F_2$, where $F_1$ and $F_2$ are integral surfaces & bisecant lines to $F_1$ intersecting $F_2$ & (see Proposition \ref{proposition F TWO SURFACES} below) \\ \hline
  $F=F_1\cup F_2\cup F_3$, where $F_1$, $F_2$ and $F_3$ are integral surfaces & lines intersecting $F_1$, $F_2$ and $F_3$ & (see Proposition \ref{proposition F THREE SURFACES} below) \\ \hline
  $F$ is a non-reduced irreducible surface & lines intersecting $F$ & $F_{\mathrm{red}}$ is a plane (cf. \cite[Theorem 3.1]{D6}) - classified \\ \cline{2-3}
  & bisecant lines to $F$ & $F_{\mathrm{red}}$ is a cubic scroll (cf. \cite[Theorem 3.2]{D6}) - classified \\ \hline
  $F=F_1\cup F_2$, where $F_1$ is a non-reduced irreducible surface and $F_2$ is an integral surface & lines intersecting both $(F_1)_{\mathrm{red}}$ and $F_2$ & one component is a plane - classified in \cite[Proposition 4.2]{D6} \\ \hline
  $F=F_1\cup C$, where $F_1$ is an integral surface and $C$ is an integral curve (possibly contained in $F_1$) & lines intersecting both $F_1$ and $C$ & classified in \cite[Theorem 1]{D2} \\ \hline
  $F$ is a point & star of lines through $F$ & $F$ can be any point of $\mathbb{P}^4$ - classified \\
  \hline
\end{tabularx}

\medskip
\caption{Description of first order congruence in $\mathbb{P}^4$.}\label{table CONGRUENCES OF P^4}
\end{table}

\smallskip
The following result improves \cite[Theorem 7.2]{DM1} and provides a description of first order congruences of lines in $\mathbb{P}^4$ with fundamental locus consisting of three irreducible components.
\begin{proposition}\label{proposition F THREE SURFACES}
Let $B\subset \mathbb{G}(1,4)$ be a first order congruence of lines in $\mathbb{P}^4$ such that the fundamental locus $F$ consists of three integral surfaces $F_1$, $F_2$ and $F_3$.
Then $B$ parameterizes the family of the lines intersecting all the $F_i$'s, and $F=F_1\cup F_2\cup F_3$ is one of the following:
\begin{itemize}
  \item[(a)] $F_1$, $F_2$ and $F_3$ are three planes intersecting properly;
  \item[(b)] $F_1$, $F_2$ are planes intersecting properly and $F_3$ is a rational surface such that for ${i,j=1,2}$ with ${i\neq j}$, the intersection of $F_3$ outside $F_i$ with the general hyperplane $H$ containing $F_i$ is a rational curve $C_i$ of degree $c_i$ and the line ${\ell= F_j\cap H}$ is ${(c_i-1)}$-secant to $C$;
  \item[(c)] $F_1$ is a plane, $F_2$ is a rational scroll with a unisecant curve given by ${F_1\cap F_2}$, and $F_3$ is a rational surface such that its intersection outside $F_1$ with the general hyperplane $H$ containing $F_1$ is a rational curve $C$ of degree $c$ and the line ${\ell\in F_2\cap H}$ residual to ${F_1\cap F_2}$ is ${(c-1)}$-secant to $C$.
  \end{itemize}
Vice versa, the family of lines intersecting $F_1$, $F_2$ and $F_3$ as above is a first order congruence of lines in $\mathbb{P}^4$.
\begin{proof}
Since the general line of the congruence meet any $F_i$ at only one point, we have that the $F_i$'s are rational surfaces by \cite[Theorem 7]{D2}.
Thanks to \cite[Theorem 7.2]{DM1} we have that at least one irreducible component of $F$ is a plane.
Moreover, if all the components intersect properly, then they are all planes as in (a).

Then let $D_{i+j-2}$ denotes the intersection of the surfaces $F_i$ and $F_j$, and let us assume that at least one of the $D_k$'s is a curve.
Without loss of generality, let $F_1$ be a plane component of $F$.

The rest of the proof is essentially based on the following simple remark.
Given a general point ${P\in \mathbb{P}^4}$, let ${H\cong \mathbb{P}^3}$ be the hyperplane spanned by $P$ and $F_1$.
Since the unique line $\ell$ of the congruence $B$ passing through $P$ must intersect $F_1$, we have that ${\ell\subset H}$, and this fact is true for any general point of $H$.
Therefore the congruence $B$ restricts to a first order congruence of lines $B_{|H}$ of ${H\cong \mathbb{P}^3}$.
Moreover, as any line of $B$ contained in $H$ intersects $F_1$, we have that the fundamental locus of $B_{|H}$ is the reducible curve ${F_{|H}}$, whose components are the closures of the intersections of $H$ with $F_2$ and $F_3$ outside $F_1$.
In particular, both $F_{|H}$ and $B_{|H}$ are described by Theorem \ref{theorem CONGRUENCES OF P^3}.

Now, let us suppose that $F_2$ and $F_3$ are not planes and that they intersect at a curve $D_3$.
Given a general hyperplane $H$ containing $F_1$, we have that any line of the congruence must intersect both the components of $F_{|H}$.
Thus $F_{|H}$ is given by a line $\ell_H$ which is a $(c-1)$-secant of a rational curve $C_H$ of degree $c$.
Without loss of generality, we can assume ${\ell_H\subset F_2\cap H}$ and ${C_H\subset F_3\cap H}$.
Since $F_2$ is not a plane, we have that ${D_1=F_1\cap F_2}$ is a plane curve and ${F_2\cap H=\ell_H\cup D_1}$.
Then we can vary $H$ in the pencil of the hyperplanes of $\mathbb{P}^4$ containing $F_1$, and we deduce that $F_2$ is a rational scroll and $F_3$ is a rational surface as in item (c).

So, let us assume that $F_2$ and $F_3$ are two non-planar surfaces intersecting properly and let us consider a general hyperplane ${H\subset \mathbb{P}^4}$ containing $F_1$.
By arguing as above, the focal locus $F_{|H}$ of the first order congruence of lines $B_{|H}$ of ${H\cong \mathbb{P}^3}$ still consists of a rational curve ${C_H\subset F_3}$ of degree $c$ and of a line ${\ell_H\subset F_2}$ which is $(c-1)$-secant to $C_H$.
Hence $F_2$ is a rational scroll and ${D_1=F_1\cap F_2}$ is one of its unisecant curves.
Since ${D_3=F_2\cap F_3}$ is assumed to be a finite set of points, there are finitely many lines of the ruling of $F_2$ passing through such points.
Then we have that ${c:=\deg C_H = 1}$, i.e. $\ell_H$ and $C_H$ are skew lines.
Indeed, if ${c\geq 2}$, the ${c-1}$ points of ${\ell_H\cap C_H}$ would lie on $D_3$ for the general $H$ in the pencil of hyperplanes containing $F_1$, but the line $\ell_H$ varies as we vary the hyperplane $H$ in the pencil.
Thus we deduce that $F_3$ is a rational scroll as well, where ${D_2=F_1\cap F_3}$ is an unisecant curve on $F_3$, and hence we have case (c) with $c=1$.

Finally, let us suppose that $F_1$ and $F_2$ are planes.
We note that they intersect properly.
Indeed if their intersection were a curve, then they would span a hyperplane and every line meeting both of them would lie in such a hyperplane.
Therefore the congruence $B$ would have order zero, a contradiction.
Then at least one of the loci $D_2$ and $D_3$ is a curve.
Notice that the congruence $B$ induces a first order congruence of lines of the general ${H\cong \mathbb{P}^3}$ containing either $F_1$ or $F_2$.
Thus we can argue as above and we see that $F_3$ is the surface in (b).

To conclude the proof, it is immediate to check that that the family of lines intersecting three surfaces as above is a first order congruence of lines of $\mathbb{P}^4$.
Namely, for the general point $P\in \mathbb{P}^4$, the lines of the family passing through $P$ are all contained in the hyperplane $H\cong \mathbb{P}^3$ containing both $P$ and the plane $F_1$.
For any choice of $F_2$ and $F_3$ between (a), (b) and (c), these surfaces induce a first order congruence on $H$ given by the lines of the family contained in $H$.
Thus we conclude that there exists a unique line of the family through $P$, and we have a first order congruence of lines of $\mathbb{P}^4$.
 \end{proof}
\end{proposition}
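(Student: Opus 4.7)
The plan is to reduce the problem to the known classification of first order congruences in $\mathbb{P}^{3}$ (Theorem \ref{theorem CONGRUENCES OF P^3}) by slicing with the pencil of hyperplanes through a plane component of the fundamental locus. I first invoke Remark \ref{remark FIRST ORDER} (i.e.\ \cite[Theorem 7]{D2}) to conclude that each $F_{i}$ is a rational surface, and then apply \cite[Theorem 7.2]{DM1} to guarantee that at least one component, which I call $F_{1}$, is a plane. If the three components intersect pairwise only in finite sets, one immediately obtains case (a), so henceforth I may assume that at least one of the intersections $D_{k}=F_{i}\cap F_{j}$ is a curve.

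For a general point $P\in \mathbb{P}^{4}$, set $H:=\langle P,F_{1}\rangle\cong \mathbb{P}^{3}$. The unique line of $B$ through $P$ must meet $F_{1}$, so it is forced to lie in $H$. Letting $P$ vary shows that the lines of $B$ contained in $H$ form a first order congruence $B_{|H}$ on $\mathbb{P}^{3}$, whose fundamental locus $F_{|H}$ is the union of the residual curves of $F_{2}\cap H$ and $F_{3}\cap H$ outside $F_{1}$. Since $F_{|H}$ has (at least) two components, Theorem \ref{theorem CONGRUENCES OF P^3} forces $F_{|H}$ to be of type (c): a line $\ell_{H}$ which is $(c-1)$-secant to a rational curve $C_{H}$ of degree $c$, with $\ell_{H}$ and $C_{H}$ cut out respectively by $F_{2}$ and $F_{3}$.

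The remainder is a case analysis as $H$ varies in the pencil through $F_{1}$. If $F_{2}$ is not a plane, then $D_{1}=F_{1}\cap F_{2}$ is a plane curve and the decomposition $F_{2}\cap H=\ell_{H}\cup D_{1}$ forces $F_{2}$ to be a rational scroll with unisecant curve $D_{1}$, producing case (c). If both $F_{2}$ and $F_{3}$ are non-planar and meet each other in at most a finite set, one tracks how the $c-1$ intersection points $\ell_{H}\cap C_{H}$ must lie on the fixed locus $F_{2}\cap F_{3}$ while $\ell_{H}$ moves with $H$; the resulting rigidity forces $c=1$, again inside (c). Finally, if $F_{1}$ and $F_{2}$ are both planes, they must intersect properly—otherwise their span is a hyperplane containing every line of the family, contradicting order one—and repeating the restriction argument through $F_{2}$ in place of $F_{1}$ yields case (b). The converse is straightforward: in each configuration the hyperplane $H=\langle P,F_{1}\rangle$ cuts out a fundamental locus of type (c) in Theorem \ref{theorem CONGRUENCES OF P^3}, so exactly one line of the family passes through the general $P$.

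The step I expect to be the hardest is the rigidity argument in the subcase where neither $F_{2}$ nor $F_{3}$ is a plane and they meet properly: one must combine the genuine motion of $\ell_{H}\subset F_{2}\cap H$ with the fact that $F_{2}\cap F_{3}$ is a fixed finite set, in order to rule out $c\geq 2$. All other configurations follow transparently from the $\mathbb{P}^{3}$ classification once the pencil-slicing reduction is in place.
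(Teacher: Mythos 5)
Your proposal follows essentially the same route as the paper's proof: rationality of the $F_i$ via \cite[Theorem 7]{D2}, existence of a plane component via \cite[Theorem 7.2]{DM1}, reduction to Theorem \ref{theorem CONGRUENCES OF P^3} by slicing with the pencil of hyperplanes through $F_1$, and the same case analysis including the rigidity argument forcing $c=1$ when $F_2$ and $F_3$ are non-planar and intersect properly. The argument is correct and matches the paper's in all essential respects.
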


We then conclude this section with a slight improvement of \cite[Theorem 7.1]{DM1}, which shall be involved in the proof of Theorem \ref{theorem MAIN}.

\begin{proposition}\label{proposition F TWO SURFACES}
Let $B\subset \mathbb{G}(1,4)$ be a first order congruence of lines in $\mathbb{P}^4$ such that the fundamental locus $F$ consists of two integral surfaces $F_1$ and $F_2$.
Then $B$ parameterizes the family of the bisecant lines to $F_2$ that intersect $F_1$, and
\begin{itemize}
  \item[(a)] if $F_1$ and $F_2$ intersect properly, $F_1$ is a plane and $F_2$ is a rational cubic scroll (possibly a cone);
  \item[(b)] if $F_1\cap F_2$ is a curve and $F_1$ is a plane, then $F_2$ is such that its intersection outside $F_1$ with the general hyperplane $H$ containing $F_1$ is a rational normal cubic curve;
  \item[(c)] if $F_1\cap F_2$ is a curve and $F_1$ is not a plane, $F_2$ is a rational cubic scroll (possibly a cone) and $F_1$ is a rational surface.
  \end{itemize}
Vice versa, if $F_1,F_2\subset \mathbb{P}^4$ are surfaces as in $\mathrm{(a)}$ or $\mathrm{(b)}$, the family of lines intersecting $F_1$ and bisecant to $F_2$ is a first order congruence of lines in $\mathbb{P}^4$.
\begin{proof}
We note that $F_1$ is a rational surface by \cite[Theorem 7]{D2} as the general line of the congruence is unisecant to $F_1$.
Then assertions (a) and (c) descend from \cite[Theorem 7.1]{DM1}.

So, let us assume that $F_1$ is a plane intersecting $F_2$ at a curve $D$.
By arguing as in the proof of the previous proposition, the congruence $B$ induces a first order congruence of lines $B_{|H}$ on the general hyperplane ${H\cong \mathbb{P}^3}$ containing $F_1$.
In particular, the focal locus $F_{|H}$ of $B_{|H}$ is the curve $C_H$ residual to $D$ in ${F_2\cap H}$.

Notice that $C_H$ must be irreducible, because each irreducible component of $C_H$ describes an irreducible component of $F_2$ as we vary $H$ in the pencil of hyperplanes containing $F_1$, and $F_2$ is irreducible.
Moreover, the general line of the congruence meets $C_H$ at two distinct points.
Then $C_H$ is a rational normal cubic curve by Theorem \ref{theorem CONGRUENCES OF P^3} and assertion (b) follows.

Finally, to prove that the configurations of (a) and (b) give rise to a first order congruence of lines of $\mathbb{P}^4$, it suffices to argue as in Proposition \ref{proposition F THREE SURFACES}.
\end{proof}
\end{proposition}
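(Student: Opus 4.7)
The plan is to mimic the hyperplane-restriction argument that works in Proposition \ref{proposition F THREE SURFACES}, relying on the classification in $\mathbb{P}^3$ (Theorem \ref{theorem CONGRUENCES OF P^3}) to pin down the residual intersection. Since the general line of $B$ is unisecant to the component $F_1$, the rationality result \cite[Theorem 7]{D2} immediately gives that $F_1$ is rational. Cases (a) and (c) of the statement already sit inside the classification carried out in \cite[Theorem 7.1]{DM1}, so I would simply quote that result and declare those two cases settled. The real content to add is case (b), i.e.\ the situation where $F_1$ is a plane and $F_1\cap F_2$ is a curve $D$.

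For case (b) I would proceed as in the companion proposition. Fix a general hyperplane $H\cong\mathbb{P}^3$ of the pencil of hyperplanes containing the plane $F_1$, and take a general point $P\in H$. Any line $\ell\in B$ through $P$ must meet $F_1$, so $\ell\subset\langle P,F_1\rangle=H$; this shows that the restriction $B_{|H}$ is itself a first order congruence of lines in $H\cong\mathbb{P}^3$. Its fundamental locus is obtained by intersecting the fundamental locus of $B$ with $H$ and then discarding the component $F_1$ that is automatically met by every line of $B_{|H}$; hence it is the residual curve $C_H\subset H$ cut out on $F_2$ outside $D$.

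Next I would check that $C_H$ is irreducible and that the general line of $B_{|H}$ is bisecant to it. Irreducibility follows because any splitting of $C_H$ would, as $H$ varies in the pencil containing $F_1$, sweep out several irreducible components of $F_2$, contradicting the integrality of $F_2$. The bisecance comes from the fact that in $H$ the only fundamental piece is $C_H$ (the plane $F_1$ has been swallowed into the forced condition), so any general line of $B_{|H}$ must acquire its Cayley--Bacharach-type intersections with the fundamental locus from $C_H$ alone; together with the fact that $B_{|H}$ has order one, Theorem \ref{theorem CONGRUENCES OF P^3} forces $B_{|H}$ into case (b) of that theorem and identifies $C_H$ with a twisted cubic. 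This is exactly the conclusion announced in (b) of the present proposition.

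Finally, for the converse one has to show that, in each configuration (a) and (b), the family of lines of $\mathbb{P}^4$ meeting $F_1$ and bisecant to $F_2$ really is a first order congruence. The argument is the same as at the end of Proposition \ref{proposition F THREE SURFACES}: given a generic $P\in\mathbb{P}^4$, every such line through $P$ is forced into the hyperplane $H=\langle P,F_1\rangle$, and inside $H$ the induced family is precisely the star of bisecant lines of either a twisted cubic or a rational cubic scroll, which is a first order congruence of $H$ by Theorem \ref{theorem CONGRUENCES OF P^3}. The step I expect to require the most care is the irreducibility and degree computation of the residual curve $C_H$ in case (b); everything else reduces cleanly to previously established material and to the $\mathbb{P}^3$ classification.
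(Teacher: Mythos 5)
Your proposal follows essentially the same route as the paper's proof: quoting \cite[Theorem 7]{D2} for the rationality of $F_1$, deferring cases (a) and (c) to \cite[Theorem 7.1]{DM1}, restricting $B$ to the pencil of hyperplanes through the plane $F_1$ to get a first order congruence of $\mathbb{P}^3$ whose fundamental locus is the residual curve $C_H$, establishing irreducibility of $C_H$ by sweeping $H$ through the pencil, and invoking Theorem \ref{theorem CONGRUENCES OF P^3} to identify $C_H$ as a twisted cubic, with the converse handled exactly as in Proposition \ref{proposition F THREE SURFACES}. The only cosmetic difference is your ``Cayley--Bacharach-type'' phrasing for the bisecance, where the paper simply observes directly that the general line meets $C_H$ in two distinct points; this does not change the substance.
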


In order to complete the classification of first order congruences of lines of $\mathbb{P}^4$, it then remains to provide an explicit description of the surfaces involved in cases (b) and (c) in the propositions above, but it seems to be a hard task.

\section{Degree of irrationality of hypersurfaces}\label{section DEGREE OF IRRATIONALITY}

In this section we investigate the degree of irrationality of smooth hypersurfaces and we prove the main results stated in the introduction.

To start we prove Theorem \ref{theorem CORTINIhypersurfaces} asserting that the degree of irrationality $\degree_r(X)$ of a smooth hypersurface of degree ${d\geq n+2}$ satisfies the bound
$$
{d-n+1\leq \degree_r(X)\leq d-1}.
$$
\begin{proof}[Proof of Theorem \ref{theorem CORTINIhypersurfaces}]
Let ${f\colon X\dashrightarrow \mathbb{P}^{n-1}}$ be a dominant rational map of degree $\degree_r(X)$.
Following Example \ref{example CORRESPONDENCE INDUCED BY DOMINANT RATIONAL MAPS}, the Zariski closure of the graph of $f$ is a correspondence ${\Gamma_f:= \overline{\left\{(y,P)\in \mathbb{P}^{n-1}\times X\left| f(P)=y\right.\right\}}}$ with null trace of degree $\degree_r(X)$.
Therefore ${\degree_r(X)\geq d-n+1}$ by Theorem \ref{theorem CORRESPONDENCE ON HYPERSURFACES}.
On the other hand, the projection of $X$ from a point $x\in X$ is a dominant rational map ${X\dashrightarrow \mathbb{P}^{n-1}}$ of degree $d-1$, and hence ${\degree_r(X)\leq d-1}$ as wanted.
\end{proof}

\smallskip
In order to deal with low dimensional projective spaces, we firstly prove some results connecting dominant rational maps of hypersurfaces and first order congruences of lines.

In this direction, we start by constructing a first order congruence of $\mathbb{P}^n$ by means of a dominant rational map ${X\dashrightarrow \mathbb{P}^{n-1}}$ of low degree.

\begin{lemma}
Let ${X\subset \mathbb{P}^n}$ be a smooth hypersurface of degree ${d\geq 2n+1-k}$ for some ${0\leq k\leq 2}$, and let ${f\colon X\dashrightarrow \mathbb{P}^{n-1}}$ be a dominant rational map of degree ${m\leq d-k}$.
Then $f$ induces a first order congruence of lines ${\Lambda_{f}\longrightarrow B_{f}}$ of $\mathbb{P}^n$.
\begin{proof}
Let ${f\colon X\dashrightarrow \mathbb{P}^{n-1}}$ be a dominant rational map of degree ${m\leq d-k}$ and let us consider the ${(m,1)}$-correspondence with null trace ${\Gamma_f:= \overline{\left\{(y,P)\in \mathbb{P}^{n-1}\times X\left| f(P)=y\right.\right\}}}$ defined as the graph of $f$ (cf. Example \ref{example CORRESPONDENCE INDUCED BY DOMINANT RATIONAL MAPS}).
Let ${\pi_1\colon \Gamma_f\longrightarrow \mathbb{P}^{n-1}}$ be the first projection map and for a generic ${y\in \mathbb{P}^{n-1}}$, let ${\pi_1^{-1}(y)=\left\{(y,P_i)|i=1,\ldots,m\right\}}$ be its fiber.
Since ${d\geq 2n+1-k}$ and ${m\leq d-k}$, we have ${m\leq 2d-2n-1}$ and hence the set of points ${f^{-1}(y)=\{P_1,\ldots,P_m\}}$ lies on a line ${\ell_y\in \mathbb{P}^n}$ by Theorem \ref{theorem CORRESPONDENCE ON HYPERSURFACES}.

We note that the smooth hypersurface $X$ is not covered by lines, otherwise it would be uniruled, which is not the case because $H^{n-1,0}(X)\cong H^0\left(\mathbb{P}^n,\mathcal{O}_{\mathbb{P}^n}(d-n-1)\right)\neq \{0\}$.
Then the line $\ell_y$ is not contained in $X$.
Furthermore ${m\geq d-n+1}$ by Theorem \ref{theorem CORTINIhypersurfaces}.
Thus $m> \frac{d}{2}$ as ${d\geq 2n+1-k}$ and ${0\leq k\leq 2}$.
It follows that for a generic point ${y\in \mathbb{P}^{n-1}}$, does not exist another point ${y'\in \mathbb{P}^{n-1}}$ such that $f^{-1}(y')\subset \ell_y$.
By sending $y\in \mathbb{P}^{n-1}$ to the point of $\mathbb{G}(1,n)$ parameterizing ${\ell_y\subset \mathbb{P}^n}$, we can then define a rational map ${\mathbb{P}^{n-1}\dashrightarrow \mathbb{G}(1,n)}$ of degree one, whose image is a ${(n-1)}$-dimensional subvariety $B'$ of the Grassmannian.

Then we consider a desingularization ${B_{f}\longrightarrow B'}$ and we have that
\begin{equation}\label{equation INDUCED CONGRUENCE}
\Lambda_{f}:=\left\{(b,P)\in B_f\times \mathbb{P}^n | P\in \ell_b\right\}\longrightarrow B_f
\end{equation}
is a congruence of lines in $\mathbb{P}^n$ induced by the map $f$.

We note that $B_f$ has order $a\geq 1$.
Indeed, if it were ${a=0}$, the general line of the congruence intersecting $X$ would lie on it and $X$ would be covered by lines.

In order to prove that $a=1$, we firstly prove that there is only one line of the congruence passing through the general point $P\in X$.
When $k=0$, let $y=f(P)$ and let $\ell_y$ the unique line containing the fiber $f^{-1}(y)$.
Then $\deg f=d$ and hence $f^{-1}(y)=X\cap \ell_y$.
Thus $\ell_y$ must be the unique line of the congruence through $P$, otherwise $P$ would belong to different fibers.

If instead $k>0$, let us consider the variety residual to $\Gamma_f$ defined as
\begin{equation}\label{equation RESIDUAL CONGRUENCE}
\Gamma'_f:=\overline{\left\{(b,P)\in B_f\times X \left| \begin{array}{l}P\in \ell_b \textrm{ and } P\not\in f^{-1}(y),\textrm{ where }\\  y\in \mathbb{P}^{n-1}\textrm{ and }f^{-1}(y)\subset \ell_b\end{array}\right.\right\}}
\end{equation}
together with the natural projections ${\pi_1'\colon \Gamma'_f\longrightarrow B_f}$ and ${\pi_2'\colon \Gamma'_f\longrightarrow X}$.
Since the general line $\ell_b$ meets $X$ at $d$ points and ${\deg f=m}$, we have that $\pi_1'$ is a dominant morphism of degree ${d-m}$.

We claim that $\pi_2'$ is not dominant.
To see this fact we firstly suppose that the points of the general fiber $(\pi_1')^{-1}(y)$ are distinct, so that $\Gamma'_f$ is a reduced variety.
If $\pi_2'$ were a dominant map, we would have that $\Gamma'_f$ would be a correspondence on ${B_f\times X}$.
Moreover, $\Gamma'_f$ would have null trace as $B_f$ is rational and ${H^{n-1,0}\left(B_f\right)=\left\{0\right\}}$.
Then ${\deg \pi_1'\geq d-n+1}$ by Theorem \ref{theorem CORRESPONDENCE ON HYPERSURFACES}, but this would lead to a contradiction.
Indeed $d\geq 2n+1-k$ and $m\geq d-n-1$, hence $\deg \pi_1'=d-m\leq n-1\leq d-n-2+k\leq d-n$ when ${1\leq k\leq 2}$.

On the other hand, if the points of the general fiber $(\pi_1')^{-1}(y)$ are not distinct, the irreducible components of $\Gamma'_f$ are not all reduced.
Then we can consider the variety $(\Gamma'_f)_{\mathrm{red}}$ obtained by taking the reduction of any component of $\Gamma'_f$.
By assuming $\pi_2'$ to be dominant, $(\Gamma'_f)_{\mathrm{red}}$ would be a correspondence with null trace of degree smaller than ${d-m}$, and---by arguing as above---we would have a contradiction.

Therefore $\pi_2'$ is not dominant and the image ${\pi_2'(\Gamma'_f)\subset X}$ is a finite union of proper subvarieties.
Hence there is only one line passing through the general point of $X$.

So, we assume by contradiction that the order of $B_f$ is ${a>1}$.
As there is only one line of the congruence passing through the general point of $X$, we deduce that the generically finite map ${\phi\colon \Lambda_{f}\longrightarrow \mathbb{P}^{n}}$ defined in (\ref{equation INCIDENCE VARIETY}) is totally ramified over any point of $X$.
Thus $X$ is a $(n-1)$-dimensional irreducible component of the focal locus $\Phi$ of the congruence $B_f$.
Then the general line $\ell_b$ of the congruence is tangent to $X$ at every intersection point which is focal but not fundamental (cf. Remark \ref{remark TANGENT}).
Therefore $\ell_b\cap X$ consists of at most $\frac{d}{2}$ distinct points.
Hence we have a contradiction because $\ell_b\cap X$ contains a fiber of $f$ and $\deg f>\frac{d}{2}$.

We thus conclude that ${\Lambda_{f}\longrightarrow B_{f}}$ is a first order congruence of lines in $\mathbb{P}^n$.
\end{proof}
\end{lemma}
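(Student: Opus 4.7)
The plan is to extract from $f$ an $(n-1)$-dimensional family of lines in $\mathbb{P}^n$ by sending a generic point $y \in \mathbb{P}^{n-1}$ to the unique line $\ell_y$ carrying the fiber $f^{-1}(y)$, and then to verify that this family has order one. The starting ingredient is the graph correspondence $\Gamma_f \subset \mathbb{P}^{n-1} \times X$ of Example \ref{example CORRESPONDENCE INDUCED BY DOMINANT RATIONAL MAPS}, which has degree $m$ and null trace. A short arithmetic check on the assumptions $d \geq 2n + 1 - k$ and $m \leq d - k$ shows that $m \leq 2d - 2n - 1$ in each of the cases $k \in \{0, 1, 2\}$, so Theorem \ref{theorem CORRESPONDENCE ON HYPERSURFACES} applies: the general fiber $f^{-1}(y)$ lies on a line $\ell_y \subset \mathbb{P}^n$, uniquely determined because $m \geq d - n + 1 \geq 3$.

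To organize the $\ell_y$'s into a congruence, I need $y \mapsto [\ell_y] \in \mathbb{G}(1,n)$ to define a generically injective rational map onto an $(n-1)$-dimensional subvariety $B' \subset \mathbb{G}(1,n)$; desingularizing $B'$ would then give the base $B_f$ together with the incidence variety $\Lambda_f$. Non-uniruledness of $X$, which is immediate from $H^{n-1,0}(X) \neq 0$, shows $\ell_y \not\subset X$, so $|\ell_y \cap X| \leq d$. Generic injectivity then follows from $m > d/2$: if two distinct fibers lay on the same line, we would have $|\ell_y \cap X| \geq 2m > d$, a contradiction. Having constructed the congruence, its order $a$ satisfies $a \geq 1$, since $a = 0$ would again force the generic line of the family into $X$. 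The delicate point is showing $a = 1$, and for this I would appeal to Remark \ref{remark TANGENT}: if $a \geq 2$, then $X$ would be a codimension-one component of the focal locus, so every non-fundamental focal intersection of the generic line $\ell_b$ with $X$ would be tangential, forcing $|\ell_b \cap X| \leq d/2$ and contradicting the fact that $\ell_b \cap X$ already contains an entire fiber of size $m > d/2$. This tangency argument, however, is only legitimate once one knows that a generic $P \in X$ lies on only one line of the congruence.

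The main obstacle is exactly this last uniqueness statement. When $k = 0$ it is automatic: $\deg f = d$ makes $f^{-1}(f(P))$ equal to $\ell_{f(P)} \cap X$, so no room is left for a second congruence line through $P$. When $k = 1, 2$ the fiber occupies only $m \leq d - k$ of the $d$ points of $\ell_{f(P)} \cap X$, and the $d - m$ residual points could in principle support a further line of the congruence through $P$. My plan is to collect these residual pairs $(b, P)$ into an auxiliary incidence variety $\Gamma'_f$ over $B_f$ of degree $d - m$, and to show that its projection to $X$ is not dominant. If it were, after passing to the reduction of each component (to accommodate possible collisions of residual points) $\Gamma'_f$ would define a correspondence with null trace on $B_f \times X$ of degree $d - m$, and Theorem \ref{theorem CORRESPONDENCE ON HYPERSURFACES} would then force $d - m \geq d - n + 1$, i.e. $m \leq n - 1$. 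Combined with $m \geq d - n + 1$ and $d \geq 2n + 1 - k$, this is arithmetically incompatible for $k = 1, 2$, so the projection cannot be dominant and the uniqueness of the line through a generic $P \in X$ is secured, completing the argument that $B_f$ has order exactly one.
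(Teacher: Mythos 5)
Your proposal is correct and follows essentially the same route as the paper: the graph correspondence with null trace plus Theorem \ref{theorem CORRESPONDENCE ON HYPERSURFACES} to get collinear fibers, the bound $m>\frac{d}{2}$ for generic injectivity of $y\mapsto[\ell_y]$, the residual incidence variety $\Gamma'_f$ (with reduction in the non-reduced case) to show uniqueness of the congruence line through a general point of $X$, and the tangency argument of Remark \ref{remark TANGENT} to exclude order $a>1$. The only differences are cosmetic: you present the order-one argument before the uniqueness step (while correctly flagging the dependence), and your arithmetic contradiction ($m\leq n-1$ versus $m\geq d-n+1$ and $d\geq 2n-1$) is an equivalent reformulation of the paper's chain of inequalities.
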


\smallskip
Vice versa, we want now to show that first order congruences of lines induce dominant rational map of hypersurfaces.
Given a nonsingular hypersurface ${X\subset \mathbb{P}^n}$ and a first order congruence of lines ${\Lambda\longrightarrow B}$ in $\mathbb{P}^n$ with fundamental locus $F$, we denote by $F_{B|X}$ the union of the components of $F$ entirely contained in the hypersurface $X$, and by $\delta_{B|X}$ we mean the number of intersection points---counted with multiplicity---of a general line of the congruence with $X$ at ${F_{B|X}}$.

Moreover, we say that two dominant rational maps ${f\colon X\dashrightarrow \mathbb{P}^{n-1}}$ and ${f'\colon X\dashrightarrow \mathbb{P}^{n-1}}$ are \emph{equivalent} if they differ for a Cremona transformation of $\mathbb{P}^{n-1}$, i.e. if there exists a birational map ${\varphi\colon \mathbb{P}^{n-1}\dashrightarrow \mathbb{P}^{n-1}}$ such that ${f'=\varphi\circ f}$.

With this notation we have the following.
\begin{lemma}
Let ${X\subset \mathbb{P}^n}$ be a smooth hypersurface of degree ${d\geq n+2}$ and let ${\Lambda\longrightarrow B}$ be a first order congruence of lines in $\mathbb{P}^n$.
Then $B$ induces a dominant rational map $f_{B}\colon X\dashrightarrow \mathbb{P}^{n-1}$ of degree ${m=d-\delta_{B|X}}$, which is unique up to equivalence.
\begin{proof}
Let us consider the variety
\begin{equation}\label{equation CORRESPONDENCE INDUCED BY A CONGRUENCE}
\Gamma_{B}:=\overline{\{(b,P)\in B\times X|P\in \ell_b \textrm{ and } P\not\in F_{B|X}\}}\subset B\times X
\end{equation}
provided with the projection maps ${\pi_1\colon \Gamma\longrightarrow B}$ and ${\pi_2\colon \Gamma\longrightarrow X}$.
As the congruence has order one, there exists a unique line of the family passing through the general point $P\in X$, and hence ${\deg\pi_2=1}$.
Moreover, such a line does not lie on $X$, otherwise the hypersurface would be uniruled, but this is impossible as $X$ is of general type.
Therefore the general line $\ell_b$ of the congruence ${\Lambda\longrightarrow B}$ meets $X$ at ${d-\delta_{B|X}}$ distinct points outside $F_{B|X}$, that is ${m:=\deg\pi_1=d-\delta_{B|X}}$.
Thus ${\Gamma_{B}}$ is a ${(m,1)}$-correspondence on ${B\times X}$, and hence it induces a dominant rational map ${\gamma_{B}\colon X\dashrightarrow B}$ of degree ${m=d-\delta_{B|X}}$ as in Example \ref{example CORRESPONDENCE INDUCED BY DOMINANT RATIONAL MAPS}.

Since $B$ is birational to $\mathbb{P}^{n-1}$ (cf. Remark \ref{remark FIRST ORDER}), we may then define a dominant rational map
\begin{equation}\label{equation INDUCED MAP}
f_B\colon\xymatrix{ X \ar@{-->}[rr] \ar[dr]_{\gamma_B} & & \mathbb{P}^{n-1} \ar@{<--}[dl] \\ & B & \\ }
\end{equation}
of degree ${m=d-\delta_{B|X}}$, which is unique up to equivalence.
\end{proof}
\end{lemma}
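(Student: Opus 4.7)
The plan is to define the correspondence
\[
\Gamma_B := \overline{\{(b,P)\in B\times X \mid P\in \ell_b \text{ and } P\notin F_{B|X}\}}
\]
with its two natural projections $\pi_1\colon \Gamma_B\to B$ and $\pi_2\colon \Gamma_B\to X$, and then translate the geometry of the first order congruence into the required statement about the map.

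First I would analyze $\pi_2$. Since $B$ has order one, there is a unique line $\ell_b$ of the family through the general point $P\in X$, so $\deg \pi_2 = 1$. To make this useful I must exclude the case that the line lies on $X$; this is where the hypothesis $d\geq n+2$ enters, because then $\omega_X\cong\mathcal{O}_X(d-n-1)$ is effective of positive degree, hence $X$ is of general type and in particular not uniruled, so no line is entirely contained in $X$. Consequently the general $\ell_b$ meets $X$ transversely (at least generically) in exactly $d$ points, $\delta_{B|X}$ of which are accounted for by the fixed intersection with $F_{B|X}$ counted with multiplicity. The remaining $d-\delta_{B|X}$ residual intersection points give the fiber of $\pi_1$ over a general $b\in B$, so $\deg \pi_1 = m := d-\delta_{B|X}$, and $\Gamma_B$ is an $(m,1)$-correspondence on $B\times X$ exactly as in Example \ref{example CORRESPONDENCE INDUCED BY DOMINANT RATIONAL MAPS}.

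Next, I would use the correspondence machinery: since $\pi_2$ has degree one, the assignment $P \mapsto (\pi_1 \circ \pi_2^{-1})(P)$ furnishes a dominant rational map $\gamma_B \colon X \dashrightarrow B$ of degree $m$. Now invoke Remark \ref{remark FIRST ORDER}, which guarantees that $B$ is a rational variety of dimension $n-1$; composing $\gamma_B$ with any birational map $B \dashrightarrow \mathbb{P}^{n-1}$ yields the desired dominant rational map
\[
f_B \colon X \dashrightarrow \mathbb{P}^{n-1}
\]
of degree $m = d - \delta_{B|X}$. Two different choices of birational identification $B \dashrightarrow \mathbb{P}^{n-1}$ differ by a birational self-map of $\mathbb{P}^{n-1}$, i.e.\ a Cremona transformation, so $f_B$ is well defined up to the equivalence specified just before the statement.

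The most delicate point I would flag in this argument is the bookkeeping behind $\deg\pi_1=d-\delta_{B|X}$, which requires the general line of the congruence to be transverse to $X$ away from $F_{B|X}$ and to have its entire contact with $F_{B|X}$ captured by $\delta_{B|X}$. Transversality is not automatic: $X$ could a priori lie inside the focal locus $\Phi$ of $B$, in which case every line of the congruence would be tangent to $X$ along its intersection with $\Phi\cap X$ (Remark \ref{remark TANGENT}), altering the count. However, the components of $\Phi\cap X$ that are focal but not fundamental contribute to $\delta_{B|X}$ through multiplicity, and the non-uniruledness of $X$ rules out the pathological scenario in which the general line of the congruence is entirely absorbed by $X$. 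Once this is cleanly justified, the rest of the proof is a direct application of the correspondence-to-map dictionary already established in Section \ref{section CORRESPONDENCES}.
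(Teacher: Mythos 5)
Your proposal is correct and follows essentially the same route as the paper: the same correspondence $\Gamma_B$, the same use of order one to get $\deg\pi_2=1$, the same non-uniruledness argument to exclude lines on $X$, the same count $\deg\pi_1=d-\delta_{B|X}$, and the same passage through the rationality of $B$ to define $f_B$ up to Cremona equivalence. The transversality point you flag is simply asserted in the paper (and is harmless here since, for a first order congruence, the focal and fundamental loci coincide set-theoretically by Remark \ref{remark FIRST ORDER}, so all excess contact is already absorbed into $\delta_{B|X}$).
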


\smallskip
We then have the following.

\begin{theorem}\label{theorem MAPS AND CONGRUENCES}
Let ${X\subset \mathbb{P}^n}$ be a smooth hypersurface of degree ${d\geq 2n+1-k}$ for some ${0\leq k\leq 2}$. Then the following hold:
\begin{itemize}
  \item[(i)] any dominant rational map ${f\colon X\dashrightarrow \mathbb{P}^{n-1}}$ of degree ${m\leq d-k}$ induces a first order congruence of lines ${\Lambda_{f}\longrightarrow B_{f}}$ of $\mathbb{P}^n$ as in (\ref{equation INDUCED CONGRUENCE});
  \item[(ii)] any first order congruence of lines ${\Lambda\longrightarrow B}$ of $\mathbb{P}^n$ induces a dominant rational map $f_{B}\colon X\dashrightarrow \mathbb{P}^{n-1}$ of degree ${m=d-\delta_{B|X}}$ as in (\ref{equation INDUCED MAP}).
\end{itemize}
In particular, the dominant rational map induced by the congruence ${B_{f}}$ is equivalent to $f$ and---vice versa---the congruence induced by $f_B$ is $B$.
\begin{proof}
Thanks to the previous lemmas, we only need to prove the last part of the statement.
Let ${f\colon X\dashrightarrow \mathbb{P}^{n-1}}$ be a dominant rational map as above and let us consider the variety ${\Gamma'_f\subset B_f\times X}$ in (\ref{equation RESIDUAL CONGRUENCE}).
As the second projection map ${\pi_2'\colon \Gamma'_f\longrightarrow X}$ is not dominant, we have that any irreducible component of $\Gamma'_f$ maps on a proper subvariety of $X$.
Moreover, any such a subvariety is met by all the lines parameterized by $B_f$, and hence it is a component of the fundamental locus $F$ of $B_f$.
Vice versa, the pullback via ${\pi_2'}$ of any component of $F$ contained in $X$ is a component of $\Gamma'_f$.
Thus ${\pi_2'(\Gamma'_f)}$ coincides with the union $F_{B_f|X}$ of the components of the fundamental locus entirely contained in $X$.
Therefore ${\Gamma'_f=\overline{\{(b,P)\in B_f\times X| P\in \ell_b \textrm{ and } P\in F_{B_f|X}\}}}$ is residual to the correspondence $\Gamma_{B_f}$ in (\ref{equation CORRESPONDENCE INDUCED BY A CONGRUENCE}).
Hence $\Gamma_f$ and $\Gamma_{B_f}$ are equivalent correspondences inducing equivalent dominant rational maps from $X$ to $\mathbb{P}^{n-1}$.

Analogously, given a first order congruence $B\subset \mathbb{G}(1,n)$, we can consider the correspondence ${\Gamma_{f_B}}$ and it is immediate to see that the map $f_B$---defined by the correspondence $\Gamma_B$---induces the congruence $B$.
\end{proof}
\end{theorem}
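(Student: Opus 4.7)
The plan is to leverage the two preceding lemmas, which already establish the constructions in parts (i) and (ii); the substantive remaining content is the compatibility assertion that $f_{B_f}$ is equivalent to $f$ and $B_{f_B}=B$. I would shift the problem from maps to correspondences, using the bijection between dominant rational maps $X\dashrightarrow \mathbb{P}^{n-1}$ of degree $m$ and $(m,1)$-correspondences on $\mathbb{P}^{n-1}\times X$ explained in Example \ref{example CORRESPONDENCE INDUCED BY DOMINANT RATIONAL MAPS}. Under this translation, equivalence of maps corresponds to equivalence of correspondences, so it suffices to show that $\Gamma_{B_f}$ and $\Gamma_f$ agree once the bases are identified via the birational map $B_f\dashrightarrow\mathbb{P}^{n-1}$, and dually that $\Gamma_{f_B}$ recovers the incidence data of $B$.

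For the first direction, I would analyse the residual variety $\Gamma'_f\subset B_f\times X$ introduced in (\ref{equation RESIDUAL CONGRUENCE}). The crucial input from the proof of the first lemma is that the second projection $\pi'_2\colon\Gamma'_f\to X$ is not dominant. Hence every irreducible component of $\Gamma'_f$ maps to a proper subvariety of $X$; by dominance of $\pi'_1$, each such subvariety is swept by lines of the congruence, so it is a component of the fundamental locus $F$ of $B_f$ contained in $X$. Conversely, any component of $F$ lying in $X$ pulls back under $\pi'_2$ to a component of $\Gamma'_f$, so $\pi'_2(\Gamma'_f)=F_{B_f|X}$. Now inside the full incidence variety $\{(b,P)\in B_f\times X:P\in\ell_b\}$, the subvariety $\Gamma_{B_f}$ defined in (\ref{equation CORRESPONDENCE INDUCED BY A CONGRUENCE}) is exactly the closure of the complement of $\Gamma'_f$; but under the identification $\mathbb{P}^{n-1}\dashrightarrow B_f$ sending $y\mapsto [\ell_y]$, this complement is precisely $\Gamma_f$. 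So $\Gamma_{B_f}$ and $\Gamma_f$ are equivalent correspondences, whence $f_{B_f}$ and $f$ are equivalent.

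For the second direction, starting from a first order congruence $B$, the map $f_B$ assigns to a general point $P\in X$ the unique $b\in B$ with $P\in\ell_b$. The fiber $f_B^{-1}(b)$ then consists of points of $\ell_b\cap X$ lying off $F_{B|X}$, in particular lies on $\ell_b$, so forming $B_{f_B}$ as in the first lemma returns the same family of lines, i.e.\ $B_{f_B}=B$ as subvarieties of $\mathbb{G}(1,n)$ (up to the desingularization). The compatibility of fundamental loci then gives $\Gamma_{B_{f_B}}=\Gamma_B$.

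The main (indeed only non-routine) obstacle is the identification $\pi'_2(\Gamma'_f)=F_{B_f|X}$: one must argue both inclusions, using non-dominance of $\pi'_2$ on one side and the fact that any fundamental component inside $X$ is met by the general line of $B_f$ on the other. Once this is in hand, the rest is a direct comparison of the pieces of the incidence correspondence of $B_f$ in $B_f\times X$.
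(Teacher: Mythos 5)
Your proposal is correct and follows essentially the same route as the paper: both reduce to the compatibility statement, analyse the residual variety $\Gamma'_f$ using the non-dominance of $\pi'_2$ to identify $\pi'_2(\Gamma'_f)$ with $F_{B_f|X}$, and conclude that $\Gamma_{B_f}$ is equivalent to $\Gamma_f$, with the converse direction treated as immediate. No substantive differences to report.
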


\begin{remark}
Let ${X\subset \mathbb{P}^n}$ be a smooth hypersurface of degree ${d\geq 2n+1}$.
As a corollary of the previous result, we have that there is a one-to-one correspondence between the set of first order congruences of lines in $\mathbb{P}^n$ and the set of equivalence classes of dominant rational maps ${X\dashrightarrow \mathbb{P}^{n-1}}$ of degree at most $d$.
\end{remark}

\begin{remark}\label{remark PROOF OF NOETHER}
We note that by setting $n=2$ and $k=1$ in the assertion of Theorem \ref{theorem MAPS AND CONGRUENCES}, it is immediate to deduce Theorem \ref{theorem NOETHER} on the gonality of plane curves of degree at least 4.
On one hand, given a smooth curve ${X\subset \mathbb{P}^2}$ of degree ${d\geq 4}$, we have that $\gon(X)=\degree_r(X)=d-1$ by Theorem \ref{theorem CORTINIhypersurfaces}.
On the other, any map ${f\colon X\dashrightarrow \mathbb{P}^1}$ of degree $d-1$ is induced by a first order congruence of lines $B_f$ in $\mathbb{P}^2$.
Since $B_f$ is a star of lines with fundamental locus consisting of a point ${Q\in \mathbb{P}^2}$ (cf. Section \ref{section CONGRUENCES}) and ${\deg f=d-1=d-\delta_{B|X}}$, we conclude that ${Q\in X}$ and ${f\colon X\dashrightarrow \mathbb{P}^1}$ is the projection from $Q$.
\end{remark}

\smallskip
Now, in order to prove Theorem \ref{theorem CORTINIsurfaces}, we need the following preliminary lemma.
\begin{lemma}\label{lemma TANGENT LINES}
Let ${X\subset \mathbb{P}^3}$ be a smooth surface of degree ${d\geq 5}$ and let ${\ell\subset X}$ be a line.
Then the congruence $B\subset \mathbb{G}(1,3)$ of the tangent lines of $X$ along $\ell$ has not order one.
\begin{proof}
Aiming for a contradiction, let us assume that $B$ is a first order congruence of lines of $\mathbb{P}^3$.
Therefore $B$ is the congruence described in (d) of Theorem \ref{theorem CONGRUENCES OF P^3}.

Now, let $B_\ell$ be the pencil of planes containing $\ell$ and let ${H\subset \mathbb{P}^3}$ be a general element.
Then ${H\cap X=\ell\cup C_H}$, where $C_H$ is a curve of degree $d-1$.
As $C_H$ is a general element of the pencil ${|\mathcal{O}_X(H-\ell)|}$ on $X$, $C_H$ is smooth outside the base locus of the linear system by Bertini's Theorem.
In particular, ${C_H}$ is smooth outside $\ell$ and the plane $H$ is tangent to $X$ at any point of ${D:=C_H\cap \ell}$.
Hence for any ${Q\in D}$, every line $\ell_Q\subset H$ through $Q$ belongs to the first order congruence $B$.
Thus the support of $D$ must consist of only one point ${Q\in \ell}$, otherwise we would have two distinct lines $\ell_Q$ and $\ell_{Q'}$ passing through the general point of $H$.

We note that ${D\in \Div^{d-1}(\ell)}$ is an general effective divisor varying in the pencil cut out on $\ell$ by the curves $C_H$, and ${D=(d-1)Q}$.
Therefore $D$ is a singular at $Q$, and hence $Q$ is a base point of the latter pencil by Bertini's Theorem.
Then for any plane $H$ containing $\ell$, the curve $C_H$ passes through ${Q\in\ell}$.
Thus any plane ${H\in B_\ell}$ is tangent to $X$ at $Q$, i.e. $Q$ is a singular point of $X$, a contradiction.
\end{proof}
\end{lemma}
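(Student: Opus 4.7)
The plan is to argue by contradiction: I assume $B$ has order one and apply the classification of first order congruences of lines in $\mathbb{P}^3$ given by Theorem~\ref{theorem CONGRUENCES OF P^3}. Since every line of $B$ is tangent to $X$ at some point of $\ell$, each $P\in\ell$ lies on a full pencil of lines of $B$---namely, the lines of $T_P X$ through $P$---so $\ell$ is contained in the fundamental locus of $B$. A quick inspection then shows that cases (a), (b), (c) cannot accommodate $\ell$ as a component of the fundamental locus (for instance, in case (c) with $\ell$ playing the role of the line component, the companion rational curve $C$ would have to lie in $T_P X$ for every $P\in\ell$, forcing $C\subset\ell$). Hence $B$ must be of type (d): $B=\bigcup_{\pi\in B_\ell} B_{\rho(\pi)}$ for a non-constant map $\rho\colon B_\ell\to\ell$.

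I then fix a general plane $H\in B_\ell$ and write $H\cap X=\ell\cup C_H$, where $\deg C_H=d-1$. By Bertini applied to the pencil $|\mathcal{O}_X(H-\ell)|$ on $X$, the curve $C_H$ is smooth outside $\ell$. The key tangency observation is that at every point $Q\in D:=C_H\cap\ell$ the two components $\ell$ and $C_H$ of the plane section $H\cap X\subset H$ meet, so $H\cap X$ is singular at $Q$; since $Q$ is smooth on $X$, this can happen only if $H=T_Q X$. Consequently every line of $H$ through $Q$ is tangent to $X$ at $Q$ and therefore belongs to $B$. But in case (d) the lines of $B$ contained in $H$ form the single pencil through $\rho(H)$, so $D$ must be set-theoretically supported at the single point $\rho(H)$; as $\deg D=d-1$, this yields the divisor identity $D=(d-1)\rho(H)$ on $\ell$.

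Finally, as $H$ varies in $B_\ell\cong\mathbb{P}^1$ the divisors $D$ sweep out a linear pencil on $\ell\cong\mathbb{P}^1$. Since $d-1\geq 4$, each member $(d-1)\rho(H)$ is non-reduced at its support, so by Bertini for pencils on a smooth curve that support point must lie in the base locus of the pencil and hence be independent of $H$. Writing $Q$ for this common point, I conclude $\rho(H)\equiv Q$, which contradicts the non-constancy of $\rho$ required by case (d)---or, read back through the tangency identity, forces $H=T_Q X$ for every $H\in B_\ell$ and hence $X$ to be singular at $Q$, contrary to hypothesis. I expect the delicate step to be the tangency identity $H=T_Q X$ at the points of $D$, as it is exactly what converts the combinatorial shape of a type (d) congruence into a contradiction with the smoothness of $X$.
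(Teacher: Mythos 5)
Your argument is correct and follows essentially the same route as the paper's proof: reduce to case (d) of Theorem \ref{theorem CONGRUENCES OF P^3}, apply Bertini to the pencil $|\mathcal{O}_X(H-\ell)|$ to get the tangency of $H$ along $D=C_H\cap\ell$, deduce $D=(d-1)Q$ from the order-one condition, and then use Bertini again on the induced pencil on $\ell$ to force $Q$ to be a fixed base point, contradicting either the non-constancy of $\rho$ or the smoothness of $X$. The only (welcome) addition is your explicit justification of why cases (a)--(c) cannot occur, which the paper leaves implicit.
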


\smallskip
So, let us prove Theorem \ref{theorem CORTINIsurfaces}.

\begin{proof}[Proof of Theorem \ref{theorem CORTINIsurfaces}]

Let $X\subset \mathbb{P}^3$ be a smooth surface of degree $d\geq 5$ and let ${f\colon X\dashrightarrow \mathbb{P}^2}$ be a dominant rational map of degree $\degree_r(X)$.
Thanks to Theorem \ref{theorem CORTINIhypersurfaces} we have $d-2\leq \degree_r(X)\leq d-1$.

If ${\degree_r(X)=d-2}$, Theorem \ref{theorem MAPS AND CONGRUENCES} assures that $f$ is equivalent to a map induced by the first order congruence of lines $B_f$ of $\mathbb{P}^3$.
Furthermore, the general line of the congruence must intersect twice the locus $F_{B_f|X}$ as ${\degree_r(X)=d-\delta_{B_f|X}}$.
Clearly, the multiplicity of the intersection of a general line of $B_f$ with $X$ at a general point of a reduced component of $F_{B_f|X}$ is one (otherwise the component would be non-reduced).

By Theorem \ref{theorem CONGRUENCES OF P^3}, if the fundamental locus $F$ is such that the general line of $B_f$ meets $F$ at two distinct points, then $F$ is either a twisted cubic or a reducible curve consisting of a non-degenerate rational curve of degree $c$ and a ${(c-1)}$-secant line.
In both these cases, $X$ must contain the whole $F$ as ${\delta_{B_f|X}=2}$, and we obtain cases (1) and (2) of the statement.

Moreover, the congruence $B_f$ cannot be described by (d) of Theorem \ref{theorem CONGRUENCES OF P^3}.
Indeed, we would have that the general line meets the fundamental locus $\ell=F_{B_f|X}$ at one point with multiplicity $2=\delta_{B_f|X}$.
Thus $B_f$ would be the congruence of tangent lines to $X$ along $\ell$, but this is impossible by Lemma \ref{lemma TANGENT LINES}.

Let now $X$ be a generic surface of degree $d\geq 5$.
We recall that it does not contain rational curves (see \cite{X}), and hence $\degree_r(X)=d-1$ by the first part of the proof.

Finally, let us assume that $X$ is a generic surface of degree ${d\geq 6}$.
Then by Theorem \ref{theorem MAPS AND CONGRUENCES} we have that the dominant rational map ${f\colon X\dashrightarrow \mathbb{P}^2}$ induces a first order congruence $B_f$ and ${\deg f=d-1=d-\delta_{B_f|X}}$.
Thus $X$ must contain a component of the fundamental locus of $B_f$.
As $X$ does not contain rational curves, we deduce from Theorem \ref{theorem CONGRUENCES OF P^3} that such a component must be a point $Q\in X$ and the dominant rational map $f$ is the projection from $Q$.
\end{proof}

\begin{remark}
We note that the original proof of Theorem \ref{theorem CORTINIsurfaces} included in \cite{Co} differs from the one above.
Indeed the second author started from the classification in \cite{LP} of correspondences with null trace on smooth surfaces in $\mathbb{P}^3$, that has been obtained using the techniques of Section \ref{section CORRESPONDENCES} as well.
Then she investigated---with a lot of effort---whenever these correspondences have bidegree $(d-2,1)$ on smooth surfaces of degree $d\geq 5$.
In our proof, we approach similarly with the use of correspondences and Mumford's induced differentials, but we then shift the problem to congruences of lines of $\mathbb{P}^3$---which are classified---so that the result is almost straightforward.
\end{remark}

\begin{example}\label{example P^3 LINE and CURVE}
According to (2) of Theorem \ref{theorem CORTINIsurfaces}, let ${X\subset \mathbb{P}^3}$ be a smooth surface containing both a rational curve $C$ of degree $c$ and a $(c-1)$-secant line $\ell$ of $C$.
Then the map ${f\colon X\dashrightarrow \mathbb{P}^2}$ of degree $d-2$ can be explicitly viewed as follows.\\
Let $P\in X$ be a general point and let ${H\subset \mathbb{P}^3}$ be the plane through $P$ containing $\ell$.
Notice that $H$ cuts out on $X$ the line $\ell$ and a curve $D_H$ of degree $d-1$, with ${P\in D_H}$.
Moreover, the plane $H$ meets the curve $C$ at $c$ points and there exists a unique point ${P_H\in H\cap C}$ not lying on $\ell$, because $\ell$ is a $(c-1)$-secant line of $C$.
Since ${P_H\in D_H}$, the projection ${f_H\colon D_H\longrightarrow \mathbb{P}^1}$ from the point $P_H$ has degree $d-2$.
Thus it is well defined the dominant rational map ${X\dashrightarrow B_\ell\times \mathbb{P}^1}$ sending a general point ${P\in X}$ to the pair ${\left(h,f_H(P)\right)\in B_\ell\times \mathbb{P}^1}$, where $h$ is the point parameterizing $H$ in the pencil ${B_\ell\subset \mathbb{G}(2,3)}$ of planes containing $\ell$.
Clearly, such a map has degree $d-2$ because any $f_H$ has.
Finally, being ${B_\ell\times \mathbb{P}^1}$ and $\mathbb{P}^2$ birational surfaces, we obtain the map $f$.\\
We note further that if $c=1$, then $\ell$ and $C$ are skew lines contained in $X$, and it is possible to provide examples of this setting for any degree $d\geq 5$.
For instance, we can consider the smooth surface ${X\subset \mathbb{P}^3}$ defined by the equation ${x_0^d-x_0x_1^{d-1}+x_2^d-x_2x_3^{d-1}=0}$, which contains the skew lines ${x_0=x_2=0}$ and ${x_0-x_1=x_2-x_3=0}$.
\end{example}

\begin{example}\label{example P^3 TWISTED CUBIC}
When ${X\subset \mathbb{P}^3}$ is smooth surface containing a rational normal cubic $C$ as in (1) of Theorem \ref{theorem CORTINIsurfaces}, the map ${f\colon X\dashrightarrow \mathbb{P}^2}$ of degree $d-2$ is the following.\\
For the general point $P\in X$, there exists a unique line $\ell_P$ through $P$ intersecting twice the curve $C$.
As $C\cong \mathbb{P}^1$, its second symmetric product is ${C^{(2)}\cong \mathbb{P}^2}$, and $f$ is the map sending a point $P\in X$ to the point $Q_1+Q_2\in C^{(2)}$, where $Q_1$ and $Q_2$ are the intersection points of $C$ with the bisecant line $\ell_P$.\\
We also point out that for any sufficiently large $d$, there exists a smooth surface $X\subset \mathbb{P}^3$ of degree $d$ containing a given twisted cubic $C$ (see e.g. \cite[p. 355]{H3}).
\end{example}

\begin{remark}
We would like to recall that any dominant rational map ${C'\dashrightarrow C}$ between smooth curves
leads to the inequality ${\degree_r(C')\geq \degree_r(C)}$, but this fact is no longer true for higher dimensional varieties.
There are indeed examples of non-rational threefolds that are unirational (cf. \cite{CG, IM}), and counterexamples in the case of surfaces (cf. \cite{Y2}).
In this direction, we have also that the general smooth surface ${S\subset \mathbb{P}^3}$ of degree $d\geq 5$ containing a fixed line has $\degree_r(S)=d-1$, and it is dominated by a surface $S'$ having $\degree_r(S')=d-2$.\\
To see this fact, let ${\ell\subset \mathbb{P}^3}$ be a line and let ${S\subset \mathbb{P}^3}$ be general between smooth surfaces of degree $d$ containing $\ell$.
By \cite[Theorem II.3.1]{L} and Theorem \ref{theorem CORTINIsurfaces} it is easy to check that $\degree_r(S)=d-1$.
Then let $B_\ell$ be the pencil of planes containing $\ell$.
Let ${g\colon \ell\longrightarrow B_\ell}$ and ${h\colon S\longrightarrow B_\ell}$ be the morphisms given by ${g(Q)=T_Q(S)}$, ${h(P)=<\ell,P>}$ if ${P\in S-\ell}$, and ${h(P)=T_P(S)}$ otherwise.
We can then consider the fibred product $S':={S\times_{B_\ell}\ell\cong S\times_{\mathbb{P}^1}\mathbb{P}^1}$ together with the dominant map ${p\colon S'\longrightarrow S}$.
Notice that ${(P,Q)\in S'}$ if and only if ${h(P)\cap S=\ell\cup C_P}$ where $C_P$ is a curve of degree $d-1$ passing through $P$ and $Q$.
Thus we can define a dominant rational map ${S'\dashrightarrow \mathbb{P}^1\times \ell}$ of degree $d-2$ sending ${(P,Q)\in S'}$ to the point ${(f_Q(P),Q)}$, where $f_Q\colon C_P\longrightarrow \mathbb{P}^1$ is the projection from $Q$ of the plane curve $C_P$.
Hence ${\degree_r(S')\leq d-2}$.
On the other hand, any dominant rational map ${f\colon S'\dashrightarrow \mathbb{P}^2}$ of degree ${\deg f=\degree_r(S')}$ induces a correspondence with null trace ${\Gamma=\{(y,P)\in \mathbb{P}^2\times S| f^{-1}(y)\cap p^{-1}(P)\neq \emptyset\}}$ of degree equal to $\deg f$.
Thus ${\deg f\geq d-2}$ by Theorem \ref{theorem CORRESPONDENCE ON HYPERSURFACES} and hence ${\degree_r(S')=d-2}$.
\end{remark}

\smallskip
Now we turn to hypersurfaces in $\mathbb{P}^4$ and we similarly prove Theorem \ref{theorem MAIN}.
We start with a simple preliminary result.
\begin{lemma}\label{lemma SCROLL}
Let $X\subset \mathbb{P}^4$ be a smooth hypersurface of degree $d\geq 7$ and let ${S\subset X}$ be a two-dimensional rational scroll possessing a unisecant plane curve $C$.
Then $S$ is degenerate.
\begin{proof}
Let $s:=\deg S$ and let $\pi$ be the plane containing $C$.
As $C$ is unisecant, we have ${c:=\deg C=s-1}$.
We recall that any codimension one irreducible subvariety of $X$ is a complete intersection by Lefschetz Theorem (see e.g. \cite[Chapter IV]{H1}).
Therefore $s=kd$ for some integer $k\geq 1$.
Notice that $\pi\cap X$ is a curve of degree $d$ containing $C$.
Thus $d\geq c=kd-1$ and hence $k=1$, i.e. $S$ is degenerate.
\end{proof}
\end{lemma}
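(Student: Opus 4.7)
The plan is to combine the Lefschetz hyperplane theorem on $X$ with a residual intersection analysis using the plane $\pi$ containing $C$. The Grothendieck--Lefschetz theorem applied to the smooth threefold ${X\subset \mathbb{P}^4}$ yields ${\Pic(X)\cong \mathbb{Z}\cdot \mathcal{O}_X(1)}$, so the surface ${S\subset X}$ is cut out on $X$ by some hypersurface ${Y\subset \mathbb{P}^4}$ of a certain degree ${k\geq 1}$, giving ${s:=\deg S=kd}$. Proving degeneracy of $S$ thus reduces to showing ${k=1}$, in which case $Y$ is a hyperplane containing $S$.

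Next, I would exploit the plane $\pi$. Since ${d\geq 7}$ prevents $X$ from containing an entire $2$-plane (otherwise a trivial separate argument, including the case $S=\pi$, already gives degeneracy), the intersection ${\pi\cap X}$ is a plane curve of degree $d$; since ${C\subset \pi\cap X}$, this yields the bound ${c:=\deg C\leq d}$.

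The heart of the plan is to establish the equality ${c=s-1}$. Taking a general hyperplane ${H\supset \pi}$, the hyperplane section ${H\cap S}$ has degree $s$ and contains $C$; since both $H\cap S$ and $C$ are unisecant on the scroll, the residual ${H\cap S-C}$ meets every ruling of $S$ in zero points, and is therefore a union of ${s-c}$ rulings lying in $H$. Letting $H$ vary in the pencil of hyperplanes through $\pi$ produces a rational map of degree ${s-c}$ from the parametrizing $\mathbb{P}^1$ of rulings of $S$ onto this pencil, obtained essentially by projecting a ruling from $\pi$; provided this map has degree one, one concludes ${c=s-1}$. Combined with ${c\leq d}$ this forces ${kd-1\leq d}$, hence ${k=1}$ and $S$ degenerate.

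The main obstacle will be ensuring that the projection map above has degree exactly one: a priori nothing prevents two distinct rulings of $S$ from spanning a common hyperplane with $\pi$, which would give ${s-c\geq 2}$. Overcoming this will rely on the specific geometry of a unisecant plane curve on a non-degenerate rational scroll in $\mathbb{P}^4$, by analyzing what having several rulings in one such hyperplane would force on the linear span of $S$.
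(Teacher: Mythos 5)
Your outline reproduces the paper's argument step for step: Lefschetz gives $s=kd$; the plane curve $\pi\cap X$ of degree $d$ gives $c\leq d$; and the equality $c=s-1$ then forces $k=1$. The one step you leave open---that a general hyperplane through $\pi$ contains exactly \emph{one} ruling of $S$, equivalently that the map you describe from the base of the ruling to the pencil of hyperplanes through $\pi$ has degree one---is precisely the step the paper dispatches in a single clause (``As $C$ is unisecant, we have $c=s-1$''), with no more justification than you give. You have therefore isolated the only nontrivial geometric input correctly, but as written your argument does not prove the lemma, and your worry is not hypothetical: the implication ``$C$ a unisecant plane curve on a rational scroll of degree $s$ $\Rightarrow$ $\deg C=s-1$'' is false for scrolls in general. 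For instance, projecting the quartic scroll $S(2,2)\subset\mathbb{P}^5$ from a general point yields a non-degenerate rational quartic scroll in $\mathbb{P}^4$ carrying unisecant plane conics, so $s-c=2$ and two distinct rulings do span a common hyperplane with $\pi$; the directrix line of the cubic scroll exhibits the same phenomenon. (These low-degree examples do not contradict the lemma itself, since their degrees are not multiples of $d\geq 7$, but they show that unisecance alone does not yield $c=s-1$.)

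What makes the equality legitimate in the paper is the provenance of the scrolls to which the lemma is applied: they come from the classification of first order congruences (see Propositions \ref{proposition F THREE SURFACES} and \ref{proposition F TWO SURFACES}, and \cite{D3}, \cite{D6}), where the residual of the plane unisecant curve in a general hyperplane section through $\pi$ is by construction a single line of the congruence, so $s-c=1$ is part of the data rather than a consequence of unisecance. To make your proof self-contained you must either build this into the hypothesis or actually carry out the analysis you only gesture at in your final paragraph; the examples above show that step cannot be waved away. Everything else in your plan (including the observation, absent from the paper, that $X$ cannot contain the plane $\pi$, so that $\pi\cap X$ really is a curve of degree $d$) is correct and matches the paper's route.
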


\smallskip
\begin{proof}[Proof of Theorem \ref{theorem MAIN}]
Let $X\subset \mathbb{P}^4$ be a smooth threefold of degree $d\geq 7$ and let ${f\colon X\dashrightarrow \mathbb{P}^3}$ be a dominant rational map of degree ${\deg f=\degree_r(X)}$.
Thanks to Theorem \ref{theorem CORTINIhypersurfaces} we have ${d-3\leq \deg f\leq d-1}$.

Let us assume that $\deg f\leq d-2$ and let $B_f$ be the first order congruence of lines of Theorem \ref{theorem MAPS AND CONGRUENCES} with fundamental locus $F$.
Since ${\deg f=d-\delta_{B_f|X}}$ we have to prove that ${\delta_{B_f|X}\neq 3}$ and to investigate when ${\delta_{B_f|X}=2}$.
The main point is to check which components of $F$ can be subvarieties of $X$.
To this aim, we recall that any codimension one irreducible subvariety of $X$ is a complete intersection by Lefschetz Theorem, and in particular, it must have degree at least $d$.

We note that the general line of a first order congruence intersects the focal locus at three points---counted with multiplicity---if and only if all the components of $F$ are surfaces.
Then ${\delta_{B_f|X}= 3}$ if and only if $X$ contains the whole fundamental locus, that is ${F=F_{B_f|X}}$.
Looking at the summarizing table in Section \ref{section CONGRUENCES}, it is immediate to check that this situation cannot occur: if $F$ is an integral surface, it is not complete intersection by \cite[Theorem 1.3]{D3}, and in the remaining cases one component of $F$ is either a plane or a cubic surface.

Now, let us study the cases leading to ${\delta_{B_f|X}=2}$.
When $F$ is an irreducible---possibly non-reduced---surface, we argue as above and we exclude these cases because $F$ is not a complete intersection of $X$.

Suppose that $F=F_1\cup F_2$, where $F_1$ and $F_2$ are integral surfaces, and $B_f$ is the family of bisecant lines to $F_2$ meeting also $F_1$.
In order to have ${\delta_{B_f|X}= 2}$, it must be ${F_2\subset X}$.
Since $F_2$ cannot be a cubic surface, Proposition \ref{proposition F TWO SURFACES} guarantees that $F_1$ is a plane and $F_2$ a non-degenerate surface such that its intersection outside $F_1$ with the general hyperplane $H\supset F_1$ is a twisted cubic $C_H$.
Arguing as in the Lemma \ref{lemma SCROLL}, if ${F_2\subset X}$, the curve $F_1\cap F_2$ would have degree $\deg F_2-3$, where $\deg F_2=kd$ for some positive integer $k$.
Moreover, $F_1\cap X$ would be a curve of degree $d$ containing $F_1\cap F_2$.
Thus ${d\geq kd-3}$ which leads to $k=1$.
Then $F_2$ would be degenerate, a contradiction.
Hence $X$ does not contain such an $F_2$.

Assume $F=F_1\cup F_2\cup F_3$, where all the components are integral surfaces and $B_f$ is the family of lines meeting all of them.
Thus two components of $F$ must be contained in $X$, but this is impossible by Proposition \ref{proposition F THREE SURFACES} and Lemma \ref{lemma SCROLL}.

Consider the case $F=F_1\cup F_2$, where $F_1$ is non-reduced irreducible surface and $F_2$ is an integral one.
Then $(F_1)_{\mathrm{red}}$ must be contained in $X$, but this is impossible.
Indeed the classification of \cite[Proposition 4.2]{D6} assures that $(F_1)_{\mathrm{red}}$ is either a plane or a non-degenerate rational scroll provided with a unisecant plane curve.

Since the locus $F_{B_f|X}$ such that ${\delta_{B_f|X}= 2}$ cannot be a point (otherwise $X$ would be singular at $F_{B_f|X}$), it remains to study when $F$ consists of an integral surface $F_1$ and an integral curve $C$.
In this case $B_f$ is the family of lines intersecting both $F_1$ and $C$, and hence $X$ must contain the whole fundamental locus.
Following \cite[Theorem 1]{D3} there are seven possible choices for $F_1$ and $C$.
In two of them $F_1$ is a plane and in another $F_1$ is a projected Veronese surface of degree $4$, hence these cases cannot occur.
In other two cases, $F_1$ is a non-degenerate rational scroll possessing a unisecant plane curve.
Then the remaining possibilities give rise to items (1) and (2) in the statement of Theorem \ref{theorem MAIN}.

Finally, we have to prove that the general smooth threefold ${X\subset \mathbb{P}^4}$ of degree $d\geq 7$ has $\degree_r(X)=d-1$, and if $d\geq 8$, any dominant rational map ${X\dashrightarrow \mathbb{P}^3}$ of degree $d-1$ is the projection from a point ${Q\in X}$.
To this aim we recall that $X$ does not contain rational curves (cf. \cite{Cl,Vo}) as $X$ is general of degree ${d\geq 7}$.
Thus the first part of the proof assures that $X$ satisfies neither (1) nor (2), and hence $\degree_r(X)=d-1$.

Then let us assume ${d\geq 8}$ and let ${f\colon X\dashrightarrow \mathbb{P}^3}$ be a dominant rational map of degree $d-1$.
By Theorem \ref{theorem MAPS AND CONGRUENCES}, any such a map still induces a first order congruence of lines $B_f$ in $\mathbb{P}^4$.
When the fundamental locus $F$ of $B_f$ is not a point, any irreducible component of $F$ is covered by rational curves (cf. Remark \ref{remark RATIONAL CURVES}).
Therefore none of them lies on $F_{B_f|X}$.
Since ${\deg f=d-1=d-\delta_{B_f|X}}$, we deduce that $F=F_{B_f|X}$ is a point, $B_f$ is the star of lines through it, and $f$ is the induced projection.
\end{proof}

\begin{example}\label{example P^4 LINE and SURFACE}
Let ${X\subset \mathbb{P}^4}$ be a smooth threefold of degree ${d\geq 7}$ as in (1) of Theorem \ref{theorem MAIN}, so that $X$ contains a rational surface $S$ of degree $s$ and a $(s-1)$-secant line $\ell$ of $S$.
Then we can easily define a dominant rational map $f\colon X\dashrightarrow \mathbb{P}^3$ of degree $d-2$ following the argument of Example \ref{example P^3 LINE and CURVE}.
Namely, given a general point ${P\in X}$ we consider the two-dimensional plane $H$ through $P$ containing $\ell$.
Such a plane cuts out on $X$ a reducible curve consisting of $\ell$ and a degree $d-1$ curve $D_H$.
Moreover, there exists a unique point ${P_H\in D_H\cap S}$ not lying on $\ell$, and we may consider the degree $d-2$ map ${f_H\colon D_H\longrightarrow\mathbb{P}^1}$ projecting $D_H$ from such a point.
Since the planes containing $\ell$ describe a rational surface ${B_\ell\subset\mathbb{G}(2,4)}$, we can argue as in Example \ref{example P^3 LINE and CURVE} ad we can define $f$.
\end{example}

\begin{example}\label{example P^4 LINE in a SURFACE}
Let ${X\subset \mathbb{P}^4}$ be a hypersurface containing a rational surface $S$ of degree $s$ and a line $\ell\subset S$ as in (2) of Theorem \ref{theorem MAIN}.
Up to equivalence, we can then define a dominant rational map $f\colon X\dashrightarrow \mathbb{P}^3$ of degree $d-2$ as follows.
Let us fix a general plane ${\pi\subset \mathbb{P}^4}$ containing $\ell$ and for the general point $P\in X$, let us consider the hyperplane $H$ through $P$ containing $\pi$.
Then $H\cap X$ consists of a surface $\Sigma_H$ of degree $d$ containing a rational curve $C_H$ of degree $s-1$ and the line $\ell$ which is $(s-2)$-secant to $C_H$ by assumptions.
Hence we can define a dominant rational map $\Sigma_H\dashrightarrow \mathbb{P}^2$ as in Example \ref{example P^3 LINE and CURVE}.
By varying $H$ in the pencil ${B_\pi\subset \mathbb{G}(3,4)}$ of hyperplanes containing $\pi$, we then obtain a dominant rational map ${X\dashrightarrow B_\pi\times \mathbb{P}^2}$ of degree $d-2$.
\end{example}

\begin{remark}
Let ${X\subset \mathbb{P}^n}$ be a smooth hypersurface of degree $d$.
When ${n=2k+1}$ is odd, it is possible to provide examples with ${\degree_r(X)= d-2}$ for any $d\geq 3$.
For instance we can extend Example \ref{example P^3 LINE and CURVE} and consider the smooth hypersurface defined by the equation ${x_0^d-x_0x_1^{d-1}+\ldots+x_{2k}^d-x_{2k}x_{2k+1}^{d-1}=0}$.
Since $X$ contains the $k$-planes ${x_0=\ldots=x_{2k}=0}$ and ${x_0-x_1=\ldots=x_{2k}-x_{2k+1}=0}$, and the family of lines meeting both is a first order congruence of lines of $\mathbb{P}^{2k+1}$, we can easily define a dominant rational map ${X\dashrightarrow \mathbb{P}^{2k}}$ of degree $d-2$.\\
On the other hand, when ${n=2k}$ is even, we do not know any explicit example of such an $X$ with ${\degree_r(X)\leq d-2}$.
In particular, it is not possible to argue as above and to use linear subvarieties to produce dominant rational maps.
In order to have ${\degree_r(X)\leq d-2}$, the hypersurface $X$ should indeed contain at least one subvariety of dimension $\geq k$ (cf. \cite[Theorem 5]{D2}), but it cannot be linear as $X$ is assumed to be nonsingular.
\end{remark}

\section*{acknowledgements}

We are extremely grateful to Gian Pietro Pirola, who made us interested in this problem and patiently assisted us with many suggestions.
We would like to thank C. Ciliberto, A. F. Lopez and E. Schlesinger for helpful discussions.
We would also like to thank P. Frediani and L. Gatto for their support.

\bibliographystyle{amsplain}

\end{document}